\newtheorem{theorem}{Theorem}
\newtheorem{proposition}[theorem]{Proposition}
\newtheorem{lemma}[theorem]{Lemma}
\newtheorem{corollary}[theorem]{Corollary}
\theoremstyle{definition}
\newtheorem{definition}[theorem]{Definition}
\newtheorem{construction}[theorem]{Construction}
\theoremstyle{remark}
\newtheorem{remark}[theorem]{Remark}
\def\R{\mathbb{R}}
\newcommand{\tgtsn}{(S^{2n+1}, \xi_{st})}
\newcommand{\tgts}{(S^{3}, \xi_{st})}
\newcommand{\mR}{\mathbb{R}}
\newcommand{\rmodz}{\mathbb{R}/\mathbb{Z}}
\numberwithin{theorem}{section}
\theoremstyle{plain}
\begin{document}

\title{Constructions and Isotopies of High-dimensional Legendrian spheres} \author{Agniva Roy}

\address{Department of Mathematics \\ Boston College \\  Chestnut Hill  \\ Massachusetts}

\email{agniva.roy@bc.edu}


\begin{abstract}
We explore the construction of Legendrian spheres in contact manifolds of any dimension. Two constructions involving open books work in any contact manifold, while one introduced by Ekholm works only in $\mathbb{R}^{2n+1}$. We show that these three constructions are isotopic to the Legendrian unknot, thus recovering and generalising a result of Courte and Ekholm, that shows Ekholm's doubling procedure produces the standard Legendrian unknot. \end{abstract}

\maketitle

\section{Introduction}

A contact manifold $(M, \xi)$ is a smooth manifold $M$ equipped with a maximally non-integrable hyperplane field $\xi$. The construction and investigation of contact manifolds has historically been aided by studying distinguished submanifolds that interact suitably with the contact structure. In dimension 3, these submanifolds are either convex hypersurfaces, or Legendrian knots, which are 1-dimensional submanifolds. These in conjunction have helped achieve the classification of tight contact structures on several classes of 3-manifolds e.g. \cite{Honda00a, Honda00b, Giroux01, Giroux00}. A general theme in most of these results can be seen as follows: first understand Legendrian isotopy classes of a family of Legendrian knots in $\tgts$, then understand the contact structures that appear by performing contact surgery on these Legendrians.

In higher dimensions, the first hindrance to carrying out this plan is the shortage of examples of Legendrian spheres. Some constructions of high dimensional Legendrian spheres are explored in \cite{EkholmEtnyreSullivan05a, ekholm2016non, dimitroglou2011knotted, bourgeois2015lagrangian, capovilla-searle_casals}, in $\mR^{2n+1}$ with the standard contact structure. In this article, we describe some general constructions of Legendrian spheres in $(2n+1)$-dimensional contact manifolds from Lagrangian disks in pages of supporting open books. 

\begin{construction}\label{constr:join}
Suppose $(M, \xi)$ is supported by the open book $(B, \nu)$, where $\nu: M-B \to S^1$ is a fibration, and each page is symplectomorphic to $(W, \omega)$. Consider a properly embedded Lagrangian $n$-disk $L$ on the page. Then consider two pages $W, W'$ and two copies of the same Lagrangian, called $L, L'$ on them. The disks can be individually perturbed to give Legendrian disks in $M$. We can further perturb $L$ and $L'$ so they can be smoothly joined to give the union, a Legendrian sphere $L \cup L'$. This now gives a closed Legendrian in $(M, \xi)$. We will call this construction $S_{join}(L)$. Since $L'$ is an isotopic copy of $L$, the notation suppresses $L'$. 
\end{construction}

\begin{construction}\label{constr:stab}
Consider $(M,\xi)$ and $L$ similarly as above. Then, the open book can be stabilised, by modifying the page by attaching a Weinstein $n$-handle along $\partial L$, and then performing a positive Dehn twist along the resulting exact Lagrangian $n$-sphere, obtained by taking the union of $L$ and the core of the handle. This resulting manifold is contactomorphic to $(M,\xi)$. Also, the Lagrangian sphere in the new page can be perturbed to a Legendrian sphere. We will call this Legendrian $S_{stab}(L)$.
\end{construction}

A motivation for looking at these constructions is the search for ``interesting'' embeddings of Legendrian spheres in high dimensions. In high codimension, with the lack of smooth knotting, it is challenging to give general constructions that will produce Legendrian knotted spheres. Since exotic spheres in smooth topology can often be produced by gluing two balls, one can ask whether something similar happens for Lagrangian disks and spheres produced thus.

Our main result is that the two constructions mentioned above do not produce anything ``interesting' -- they give Legendrian spheres that are isotopic to the standard Legendrian unknot. The standard Legendrian unknot is defined to be the Legendrian realisation of the $n$-dimensional unknotted sphere in a Darboux neighbourhood, which is contactomorphic to a Darboux neighbourhood in $\tgtsn$, which is the boundary of an exact Lagrangian disk in $(B^{2n+2}, \omega_{st})$. Its front projection can be inductively constructed by starting from the unknot with maximal Thurston-Bennequin number in $(\mR^3,\xi_{st})$ and successively spinning half of the front projection.

\begin{theorem}
\label{thm:isotopy}
Consider a supporting open book decomposition of a contact manifold $(M, \xi)$. Consider a Lagrangian disk $L$ in the page. Then $S_{join}(L)$ and $S_{stab}(L)$ are both isotopic to the standard Legendrian unknot.
\end{theorem}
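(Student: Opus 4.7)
The plan is to establish the three-way Legendrian isotopy by first reducing the two open-book constructions to each other, and then reducing the common result to a standard model near the binding. I would first handle the comparison $S_{stab}(L) \simeq S_{join}(L)$ by analyzing the stabilization construction in a neighborhood of the new Lagrangian sphere $\Sigma = L \cup D^n$, where $D^n$ is the core of the attached Weinstein $n$-handle. The positive Dehn twist $\tau_\Sigma$ added to the monodromy locally matches the monodromy of the standard Lefschetz fibration around a critical point. Under the contactomorphism from the stabilized open book back to $(M,\xi)$, the Legendrian perturbation of $\Sigma$ should be carried to a sphere whose one hemisphere is $L$ on a page of the original open book, and whose other hemisphere is the image of $D^n$ under $\tau_\Sigma$ composed with the monodromy, producing a Lagrangian disk $L'$ isotopic to $L$ on a different page. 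After the joining near the thickened binding provided by Lemma \ref{lagtoleg}, this matches $S_{join}(L)$.

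Next, to show $S_{join}(L)$ is Legendrian isotopic to the standard Legendrian unknot, I would use a contraction argument. The Liouville vector field on each page can be used to shrink $L$ (and simultaneously its copy $L'$) toward a chosen boundary point $p \in \partial L \subset B$ through properly embedded Lagrangian disks in the pages. Combined with a Hamiltonian isotopy that drags $\partial L$ into a small arc on the binding near $p$, this reduces $L$ to an arbitrarily small standard Lagrangian half-disk $L_0$. Lemma \ref{lagtoleg}, applied parametrically in time, then promotes this family of Lagrangian contractions to a Legendrian isotopy of $S_{join}(L)$ to $S_{join}(L_0)$. Since $S_{join}(L_0)$ lies entirely in a Darboux neighborhood of $p$, it coincides with the Legendrian realisation of the unknotted sphere in that Darboux ball, which is the standard Legendrian unknot by definition.

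The main obstacle will be the second part: controlling the contraction in the presence of the boundary condition $\partial L \subset B$. The Liouville flow naturally contracts $L$ toward the skeleton of the Weinstein page, but our disks are properly embedded with prescribed boundary behavior on the binding, so the flow must be modified near $B$ to respect this. Furthermore, the smooth joining of $L$ and $L'$ across the thickened binding must be maintained throughout the isotopy; this requires a careful time-dependent model of the thickened binding region, together with an application of Lemma \ref{lagtoleg} that is uniform in the contraction parameter, to ensure that the joined sphere stays smoothly embedded and Legendrian at every stage. By contrast, Part 1 — the comparison between $S_{stab}$ and $S_{join}$ — is primarily a local model computation and should be the more routine step once an explicit model of the stabilization and Dehn twist is fixed.
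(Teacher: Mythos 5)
Your proposal identifies the right overall structure (first compare $S_{stab}$ to $S_{join}$, then reduce to a local model), but both halves diverge from the paper in ways that leave real gaps.

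For the comparison $S_{stab}(L)\simeq S_{join}(L)$, you write ``under the contactomorphism from the stabilized open book back to $(M,\xi)$ \dots''. This is exactly where the paper's Theorem~\ref{thm:stabilise} is needed and your argument is incomplete. Proposition~\ref{prop:stabilise} only gives \emph{some} contactomorphism between the two abstract open book manifolds; it is not canonical, and it is not a priori an identity away from the stabilization region. To turn this into a Legendrian \emph{isotopy} in the fixed manifold $(M,\xi)$, the paper first proves that stabilization can be realized as an embedded surgery: a $(2n+1)$-ball neighborhood of $L$ is removed and replaced (Section~\ref{sec:stabilise}). Only then can one place both $S_{stab}(L)$ and the perturbed $S_{join}(L)$ in the same ambient picture. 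The paper then builds the isotopy very explicitly by modelling the Dehn-twist surgery as $S_{-1}\rightsquigarrow S_1$ in $\mathbb{R}^{2n+2}$ and writing a one-parameter family of Legendrian disks as $1$-jets of generating functions on $S^n$, matching boundary data across the surgery locus (Sections~\ref{sss:isotopy1}--\ref{sss:isotopy3}). Your ``local model computation'' intuition is right in spirit, but you give no mechanism that keeps track of the boundary behavior at the surgery region, which is where all the work is.

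For the reduction to the unknot, your Liouville-contraction argument has the gap you yourself flag: the Liouville flow does not preserve proper embeddedness with boundary on the binding, and trying to fix that by a time-dependent model of the thickened binding plus a parametric Lemma~\ref{lagtoleg} is precisely the hard part that is left unaddressed. The paper sidesteps this entirely. It isotopes $S_{join}(L)$ (via the generating-function picture of Figure~\ref{fig:join}) so that the two hemispheres become $L$ and a small pushoff of $L$, so the whole sphere sits inside a standard $J^1$-neighborhood of the single Legendrian disk $L$. The isotopy class of the sphere then depends only on the ambient Legendrian isotopy class of the disk $L$, and by the h-principle for Legendrian embeddings of open $n$-manifolds any two Legendrian $n$-disks in a $(2n+1)$-contact manifold are isotopic. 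Replacing $L$ by the standard Legendrian disk in a Darboux ball gives the standard unknot. This avoids contracting $L$ at all, and in particular never has to grapple with the boundary condition $\partial L\subset B$ along a flow.

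In short: the paper's two key moves that your proposal is missing are (i) Theorem~\ref{thm:stabilise}, upgrading stabilization from an abstract contactomorphism to an embedded surgery so that a Legendrian isotopy can be written down, and (ii) replacing the contraction of $L$ by the h-principle after first collapsing $S_{join}(L)$ to a disk-plus-pushoff inside a standard disk neighborhood.
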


\begin{construction}\label{constr:front}
This construction of Legendrian spheres in $(\mR^{2n+1}, \xi_{st})$ was introduced by Ekholm in \cite{ekholm2016non}. Start with a Lagrangian disk $L$ which is cylindrical near its boundary, in the symplectisation of $\mR^{2n-1}$. Embed it in a hypersurface in $\mR^{2n+1}$ transverse to the Reeb flow. Then join the Legendrian lifts of this disk and a reflection of the disk in the same hypersurface to obtain a Legendrian sphere $\Lambda(L,L)$. An example of this construction is given in Figure \ref{fig:Cobordism}. 
\end{construction}

\begin{figure}[htb]{\tiny
\begin{overpic}[width=\textwidth,tics=10] 
{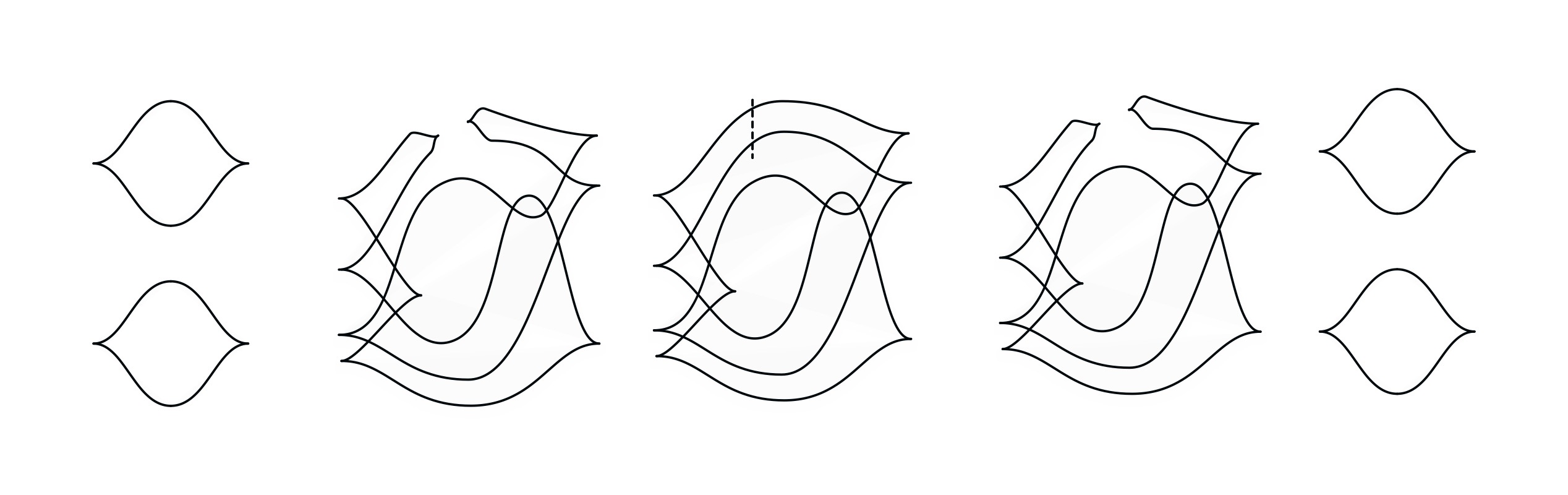}
\end{overpic}}
\caption{Front projection of a Legendrian sphere in $(\mR^5, \xi_st)$, where $L$ is the Lagrangian disk described by a pinch move from the middle knot and going to two Legendrian unknots, which are then capped off. It follows by the main theorem of \cite{courte2017lagrangian} that this is isotopic to the standard unknot.}
\label{fig:Cobordism}
\end{figure}

In \cite{courte2017lagrangian}, Courte-Ekholm show that $\Lambda(L,L)$ is isotopic to the standard Legendrian unknot. The contact manifold $(S^{2n+1}, \xi_{st})$ is supported by an open book where the pages are symplectomorphic to $(D^{2n}, \omega_{st})$, and the monodromy is the identity. The binding is contactomorphic to $(S^{2n-1}, \xi_{st})$. Given a Lagrangian disk $L$ in $(B^{2n}, \omega_{st})$, one can construct all the three Legendrians as mentioned above. We can show that Courte-Ekholm's result is a particular case of Theorem~\ref{thm:isotopy}.

\begin{corollary} (Originally proven in \cite{courte2017lagrangian})
\label{cor:courte}
Given a Lagrangian disk $L$ in $(B^{2n}, \omega_{st})$, $\Lambda(L,L) \subset (S^{2n+1}, \xi_{st})$ is isotopic to the standard Legendrian unknot. 
\end{corollary}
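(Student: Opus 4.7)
The plan is to reduce the corollary directly to Theorem \ref{thm:isotopy} by identifying Ekholm's front construction $\Lambda(L,L)$ with the open book join construction $S_{join}(L)$ applied to the standard open book on $(S^{2n+1}, \xi_{st})$. Once this identification is made at the level of Legendrian isotopy, Theorem \ref{thm:isotopy} says $S_{join}(L)$ is isotopic to the standard Legendrian unknot, which immediately gives the claim.

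The first step is to set up compatible models. The standard open book on $(S^{2n+1}, \xi_{st})$ has pages symplectomorphic to $(D^{2n}, \omega_{st})$ with identity monodromy and binding $(S^{2n-1}, \xi_{st})$. Removing a point on the binding identifies $(S^{2n+1}, \xi_{st})$ with $(\mR^{2n+1}, \xi_{st})$ in such a way that, after applying a contactomorphism, the pages of the open book appear as hypersurfaces transverse to the Reeb vector field. This is precisely the class of hypersurfaces into which Ekholm's Construction \ref{constr:front} embeds $L$, and the induced symplectic form on such a hypersurface agrees with $\omega_{st}$ on $D^{2n}$.

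The second step is to match the two disks and the joining region. In $\Lambda(L,L)$, one embeds $L$ in a Reeb-transverse hypersurface, reflects it across that hypersurface, and joins the Legendrian lifts of $L$ and its reflection near $\partial L$. Under the identification above, the reflected copy is supported on a nearby page on the opposite side, exactly the data $W, W', L, L'$ used in $S_{join}(L)$. The joining region near $\partial L$ in the front picture is contained in a Weinstein neighborhood of the binding, which is the same model used when Construction \ref{constr:join} joins $L$ and $L'$ across a thickened binding. Thus locally the two constructions produce the same Legendrian sphere up to the choice of perturbation.

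The main obstacle is the last point: verifying that the Legendrian perturbations and local join operations used in the two constructions agree up to Legendrian isotopy, not merely up to smooth isotopy. I would handle this by working in a standard contact neighborhood of the binding $S^{2n-1} \times D^2$, where both constructions reduce to joining two copies of $L$ across the $D^2$ factor via a parametrised family of Legendrian perturbations; any two such families are connected through the contact neighborhood theorem for the binding and a relative version of Legendrian approximation. With this local uniqueness established, $\Lambda(L,L)$ is Legendrian isotopic to $S_{join}(L)$, and Theorem \ref{thm:isotopy} then yields isotopy to the standard Legendrian unknot.
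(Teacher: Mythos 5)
Your strategy is the same as the paper's: identify $\Lambda(L,L)$ with $S_{join}(L)$ for the standard open book on $(S^{2n+1}, \xi_{st})$, then invoke Theorem~\ref{thm:isotopy}. The difference is in how the identification is executed. The paper does not try to match Ekholm's two mirror-image disks in $W_\rho$ directly with the two pages $W, W'$ of the open book; instead it first isotopes $S_{join}(L)$ to the normal form ``Legendrian lift of $L$ on a page joined with a close pushoff'' (the $L_{2,\epsilon}$ picture from Section~\ref{sec:isotopy}), then applies the explicit contactomorphism between a hemisphere of $(S^{2n+1}, \xi_{st})$ and $(\mR^{2n+1}, \ker(dz + \sum r_i^2 d\theta_i))$, passes through a further chain of contactomorphisms to $(\mR^{2n+1}, \ker(dz - \sum_{i<n} y_i dx_i + r_n^2 d\theta_n))$, and only then deforms the hypersurface $\{z=0\}$ to $W_\rho$ and traces through Courte--Ekholm's argument to see $\Lambda(L,L)$ appear. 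That normal form step matters: in Ekholm's construction the two halves are $L$ and a reflection $L^-$ lying in the \emph{same} Reeb-transverse hypersurface, with their Legendrian lifts at different $z$-heights, whereas in $S_{join}$ the two halves are copies of the same disk on two different pages. Your step 2 asserts these coincide ``up to the choice of perturbation'' essentially by inspection, and that is the gap the paper's normal-form argument is designed to close: one first shows both constructions can be isotoped to the disk-plus-nearby-pushoff picture, and only then are the models genuinely the same. Your proposed fix (working in a standard contact neighbourhood of the binding and appealing to a relative Legendrian approximation/uniqueness statement) is a plausible alternative route, but as written it is the place where a reader would want more, whereas the paper offloads the corresponding work to the explicit isotopy constructed for Theorem~\ref{thm:isotopy} plus Courte--Ekholm's own deformation of $W_\rho$.
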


The paper is organised as follows: In Section \ref{sec:contact}, we introduce relevant background and give references for further exposition. In Section \ref{sec:isotopy}, we prove Theorem \ref{thm:isotopy}, namely that the join and stabilisation constructions give spheres isotopic to the standard unknot. Finally we remark on how this generalises Courte-Ekholm's result.

\subsection{Acknowledgements} I would like to thank my advisor John Etnyre for the discussions that led to asking the questions that this paper eventually answers, and also for his patience and guidance in helping me come up with the proof strategies. I am further indebted to his careful reading and comments on various drafts of this paper, and for his assistance in organising the exposition. I thank Sylvain Courte and Tobias Ekholm for answering my questions about their result over email. I am also grateful to Hyunki Min and James Conway for insightful conversations at the beginning of the project. I am further indebted to helpful feedback and suggestions from an anonymous referee. This work was partially supported by NSF grant DMS-1906414.

\section{Background}\label{sec:contact}

In this section we will give the necessary background on contact geometry, Legendrian submanifolds, Legendrian surgery, and open books, to set up the outline for the proofs of the main theorems. The reader is encouraged to consult Otto Van Koert's notes \cite{vanKoert17} for more details.

\subsection{Legendrian submanifolds, front projections, and Legendrian surgery}\label{surgery}

\begin{definition}
Given a contact manifold $(M^{2n+1},\xi)$, an $n$-dimensional submanifold $L$ is called a Legendrian if $T_x(L) \subset \xi_x$ for every $x \in L$. 
\end{definition}

It is well-known that a Legendrian sphere in a contact manifold always has a standard neighbourhood. 

\begin{lemma}\label{lem:std}
If $S$ is a Legendrian $n$-sphere in $(M^{2n+1}, \xi)$, then in any open set containing $S$ there is a neighborhood $N$ with boundary $\partial N = S^n \times S^n$ contactomorphic to an $\epsilon$-neighbourhood $N_{\epsilon}$ of the zero section $Z$ in the 1-jet space of $S^n$, denoted $J^{1}(S^n)$. We call $N$ a standard neighbourhood.
\end{lemma}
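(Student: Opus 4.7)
The statement is a Legendrian version of the Weinstein tubular neighborhood theorem specialised to the sphere, so my plan is to produce $N$ by first building a smooth identification of a neighborhood of $S$ in $M$ with a neighborhood of the zero section $Z$ in $J^{1}(S^n) = T^{*}S^n \times \mathbb{R}$, and then upgrading this to a contactomorphism via a relative Moser argument.

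First, I would set up the model carefully. On $J^1(S^n)$ with coordinates $(q,p,z)$ take the contact form $\alpha_0 = dz - \lambda_{\mathrm{can}}$, where $\lambda_{\mathrm{can}}$ is the canonical Liouville form on $T^{*}S^n$; then $Z = \{p=0,\ z=0\}$ is Legendrian, and for any small $\epsilon$ the set $N_\epsilon := \{|p|^2 + z^2 \le \epsilon^2\}$ is a compact neighborhood of $Z$. I would next identify the normal bundle of $S$ in $M$. Choose a contact form $\alpha$ for $\xi$ in some open set $U$ containing $S$. Because $S$ is Legendrian and $d\alpha|_\xi$ is nondegenerate, the bundle map $v \mapsto d\alpha(v,\cdot)|_{TS}$ furnishes a canonical isomorphism $\xi|_S / TS \cong T^{*}S$. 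The Reeb vector field $R_\alpha$ gives a canonical trivialisation of the line transverse to $\xi$. Therefore the normal bundle of $S$ in $M$ is canonically isomorphic to $T^{*}S \oplus \mathbb{R}$, matching the normal bundle of $Z$ in $J^1(S^n)$.

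Using a Riemannian metric on $S \cong S^n$ I would exponentiate to get a diffeomorphism $\Phi$ from a neighborhood of $S$ in $M$ to a neighborhood of $Z$ in $J^1(S^n)$, fixing $S$ pointwise and inducing the identification above on normal bundles. Pulling back, we obtain two contact forms on the same neighborhood of $Z$, namely $\alpha_0$ and $\Phi_*\alpha$, which agree along $Z$ together with the relevant first-order data (their kernels agree on $TZ$, and $d\alpha$ matches $d\alpha_0$ on the normal bundle by construction). A standard relative Moser/Gray stability argument then produces a diffeomorphism $\psi$ of a possibly smaller neighborhood of $Z$, fixing $Z$ pointwise, with $\psi^*\alpha_0 = f \cdot \Phi_*\alpha$ for a positive function $f$. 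Composing $\Phi$ with $\psi^{-1}$ and restricting to $N_\epsilon$ for $\epsilon$ small enough that the image lies inside the prescribed open set gives the desired contactomorphism.

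Finally, to identify $\partial N$, observe that $\partial N_\epsilon$ is the unit sphere bundle of the rank-$(n{+}1)$ vector bundle $T^{*}S^n \oplus \mathbb{R}$ over $S^n$. Since $TS^n$ is stably trivial (its sum with the outward normal bundle in $\mathbb{R}^{n+1}$ is trivial), so is $T^{*}S^n \oplus \mathbb{R}$; hence its unit sphere bundle is diffeomorphic to $S^n \times S^n$. The main technical point I expect to need some care is the relative Moser step: one must control the interpolation $\alpha_t = (1-t)\Phi_*\alpha + t\alpha_0$ so that it stays contact near $Z$, and solve the time-dependent equation for a vector field tangent to $Z$; this is standard but is the only non-formal ingredient in the argument.
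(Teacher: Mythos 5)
The paper does not actually supply a proof of this lemma; it is stated as a ``well-known'' fact, being the Legendrian (tubular) neighbourhood theorem (e.g.\ Geiges, \emph{An Introduction to Contact Topology}, Theorem~2.5.8) specialised to $S^n$, together with a topological identification of the boundary of the disk bundle. Your sketch reproduces the standard proof of that theorem --- the canonical splitting of the normal bundle as $T^*S \oplus \mathbb{R}$ via $d\alpha$ and the Reeb field, a tubular-neighbourhood diffeomorphism matching the normal-bundle data, and a relative Gray/Moser argument --- and is correct in its essentials; the ``any open set'' clause is handled by shrinking $\epsilon$, as you note. One small point of wording: in the last paragraph you conclude from ``$T^*S^n \oplus \mathbb{R}$ is stably trivial'' that its unit sphere bundle is $S^n \times S^n$; stable triviality alone does not give a product sphere bundle, but your parenthetical in fact shows $TS^n \oplus \mathbb{R} \cong \mathbb{R}^{n+1}$ is genuinely trivial (normal bundle of $S^n \subset \mathbb{R}^{n+1}$ is a trivial line), and $T^*S^n \oplus \mathbb{R} \cong TS^n \oplus \mathbb{R}$ via a metric, so the bundle is actually trivial and the conclusion follows. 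You should state ``trivial'' rather than ``stably trivial'' there.
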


The model we will use to describe Legendrian surgery is understood as what is happening on the boundary when a Weinstein handle is attached along the Legendrian sphere. This is called the {\em flat Weinstein model} and is described in Section 3 of \cite{vanKoert17} in more generality, for isotropic surgery along $S^k$ for $k \leq n$. Our description follows the exposition there.

\vfill

{\bf Notation:} To make the notation less cluttered when we talk about $\mR^{2n+2}$, we will write the coordinates $(z_1, w_1, \cdots, z_{n+1}, w_{n+1})$ as $(z,w)$. The symplectic form $\omega_0 = \sum_{i=1}^{n+1} dz_i \wedge dw_i$ will be referred to as $dz \wedge dw$. Similar liberties will be taken with 1-jet space coordinates where $(z, p_1, q_1, \cdots, p_n, q_n)$ will be truncated to $(z, q, p)$, and the contact structure there is $\ker(dz + pdq)$. Products between vectors should be thought of as dot products.

\vfill

Consider the symplectic manifold $(\mR^{2n+2}, \omega_0)$, where the coordinates are $(n+1)$ pairs of $(z,w)$ coordinates, and $\omega_0 = dz \wedge dw$. The vector field $X = 2z \partial_z - w\partial_w$ is Liouville. The set $S_{-1} \coloneqq \{(z,w) \mid |w|^2 = 1\}$ is transverse to $X$ and inherits the contact form $\alpha = 2zdw + wdz$. In $S_{-1}$, the sphere $\{z=0, |w|^2=1\}$ describes a Legendrian sphere. Using $\psi_W : J^{1}(S^n) \to S_{-1}$ given by $(z,q,p) \mapsto (zq+p, q)$, we get a strict contactomorphism between $S_{-1}$ and the standard neighbourhood described in Lemma \ref{lem:std}. Thus $S_{-1}$ can be regarded as the standard neighbourhood of a Legendrian sphere.

Now, Legendrian surgery along a Legendrian $S$ will involve removing a neighbourhood of $S$ identified with $S_{-1}$ and gluing in another contact hypersurface of $(\mR^{2n+2}, \omega_0)$. The contact hypersurface involved in that is called $S_1$ and we describe it here. Define functions $f$ and $g$, described in Figure \ref{fig:handle}, to satisfy the following:
\begin{itemize}
\item $f$ is increasing on $[1-\delta, \infty)$
\item $f(w) = 1$ for $w \in [0, 1- \delta), f(w) = w + \epsilon$ for $w > 1 - \frac{\delta}{2}$
\item $g$ is increasing on $(0, 1 + \delta)$
\item $g(z) = z$ for $z < 1$, $g(z) = 1 + \delta$ for $z > 1 + \delta$
\end{itemize}

\begin{figure}[htb]{\tiny
\begin{overpic}[width=\textwidth,tics=10] 
{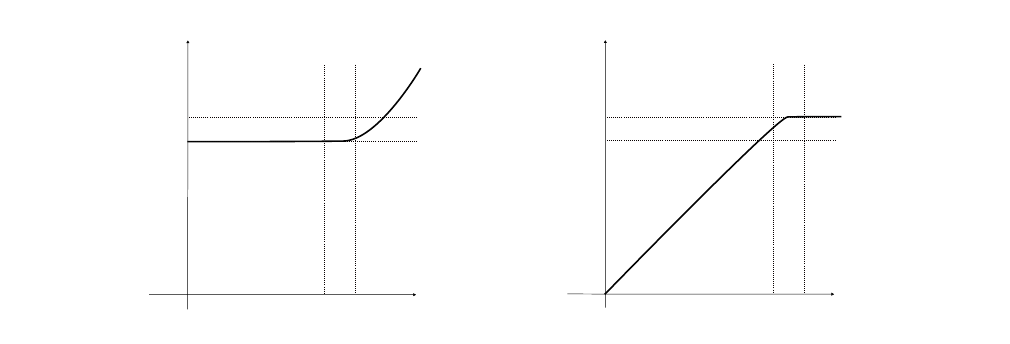}
\put(130,15){$1-\delta$}
\put(160,15){$1$}
\put(190,15){$w$}
\put(80,90){$1$}
\put(68,105){$1+\delta$}
\put(80,130){$f$}
\put(362,15){$1+\delta$}
\put(350,15){$1$}
\put(388,15){$z$}
\put(270,90){$1$}
\put(260,105){$1+\delta$}
\put(270,130){$g$}
\end{overpic}}
\caption{The functions $f$ and $g$ used to describe Legendrian surgery.}
\label{fig:handle}
\end{figure}

\begin{figure}[htb]{\tiny
\begin{overpic}[width=\textwidth,tics=10] 
{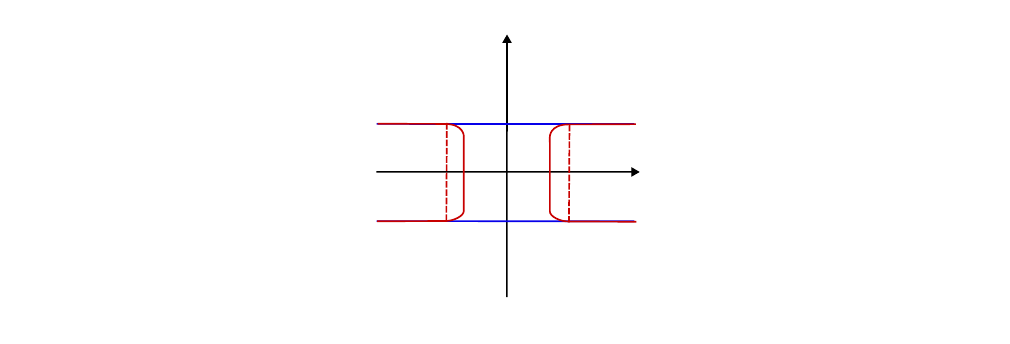}
\put(215,70){$S_{1}$}
\put(245,70){$S_{1}$}
\put(160,100){$S_{-1}$}
\put(160,55){$S_{-1}$}
\put(194,90){$S_1^{st}$}
\put(265,90){$S_1^{st}$}
\put(236,135){$z$}
\put(290,83){$w$}
\end{overpic}}
\caption{The hypersurfaces involved in defining Legendrian surgery.}
\label{fig:surgery}
\end{figure}

Then, define the hypersurface $S_1 \coloneqq \{(z,w)\mid f(w^2) - g(z^2) = 0\}$. As $X$ is transverse to $S_1$, it inherits a contact structure. Then, Legendrian surgery along $S$ is removing $\nu(S) \cong S_{-1}$ and gluing $S_1$ in its place. If $S \subset (M, \xi)$, and there is a symplectic manifold $W$ obtained by attaching a Weinstein handle to part of the symplectisation $(M\times [0,1], d(e^t \alpha))$, along $S$ in $M \times \{1\}$, the Legendrian surgery along $S$ can be understood as the upper boundary of $W$. A schematic of these hypersurfaces is described in Figure~\ref{fig:surgery} -- the solid blue and red lines represent $S_1$ and $S_{-1}$.

\subsection{Weinstein handle attachment}
We briefly review the notion of Weinstein handle attachments here, which we will need to define stabilisation of open books. For more detailed exposition the reader is encouraged to consult \cite{cieliebak2012stein}.

A Weinstein domain is the symplectic analogue of a smooth handlebody. For a $2n$-dimensional domain, Weinstein $k$-handles can have index at most $n$, and are attached along isotropic $(k-1)$-spheres in the convex boundary. Recall that a submanifold $S$ of a contact manifold is called {\em isotropic} if $T_xS \subset \xi_x$ for all $x \in S$.

\begin{definition}
A Weinstein handle of index $k$ is $h^k = D^k \times D^{2n-k}$ with a symplectic structure so that $\partial_{-}h^k = (\partial D^k)\times D^{2n-k}$ is concave, and $\partial_{+}h^k = D^k \times (\partial  D^{2n-k})$ is convex. Moreover, $D^k \times \{0\}$ is isotropic and its intersection with $\partial_{-}h^k$ is an isotropic $S^{k-1}$ in the contact structure induced on $\partial_{-}h^k$. Thus, the attaching sphere of a Weinstein $k$–handle is an isotropic $S^{k-1}$. Given an isotropic sphere $S^{k-1}$
in the convex boundary of a symplectic manifold with a choice of trivialization of its conformal symplectic normal bundle, one can attach a Weinstein k–handle by identifying a neighborhood of the isotropic sphere with $\partial_{-}h^k$.
\end{definition}

A Weinstein handle of index $n$ is called a {\em critical} Weinstein handle, and is attached along a Legendrian sphere. It will be useful for us to understand the local model for attaching a critical Weinstein handle. Consider $\mR^{2n}$ with the symplectic structure $\sum_{i=1}^n dx_i \wedge dy_i$. Now consider $H_{a,b} \coloneqq D_a \times D_b$, where $D_a$ is the disk of radius $a$ in the $x_i$ subspace and $D_b$ the disk of radius $b$ in the $y_i$ subspace. Then, $H_{a,b}$ is a model for the Weinstein $n$-handle $h^n$. The expanding vector field $v = \sum_{i=1}^n -y_idy_i + 2x_idx_i$ induces contact structures on $\partial_{-}h^n = (\partial D_a) \times D_b$ and $\partial_{+}h^n = D_a \times (\partial D_b)$.

\subsection{Contact open books}

The background on contact open books is taken from the lecture notes by Van Koert \cite{vanKoert17}. The reader is referred to the same for more details. 

\begin{definition}\label{def:abstract}
An (abstract) contact open book $(\Sigma, \lambda, \phi)$, or $Open(\Sigma, \phi)$ if we suppress the Liouville form from the notation, consists of a compact exact symplectic manifold $(\Sigma, \lambda)$ and a symplectomorphism $\phi : \Sigma \to \Sigma$ with compact support, i.e., it is identity near $\partial \Sigma$.
\end{definition}

\begin{definition}\label{def:embed}
An (embedded) supporting open book for a contact manifold $(M, \xi)$ is a pair $(\nu, B)$, where $B$ is a codimension-2 submanifold of $M$ with trivial normal bundle, such that 
\begin{itemize}
\item $\nu : (M-B) \to S^1$ is a fiber bundle, such that $\nu$ gives the angular coordinate of the $D^2$-factor of a neighbourhood $B \times D^2$ of $B$, and
\item if $\alpha$ is a contact form for $\xi$, it induces a positive contact structure on $B$ and $d\alpha$ induces a positive symplectic structure on each fiber of $\nu$
\end{itemize}
\end{definition}

The embedded open book constructed from Definition~\ref{def:abstract} is the manifold $\Sigma \times [0,1]/ \sim$, where the equivalence relation $\sim$ identifies all points $(x,t)$ and $(x,t')$ where $x \in \partial \Sigma$, and identifies points $(x,0)$ with $(\phi(x),1)$. For our purpose, we will employ another (but equivalent, up to contact isotopy) way of building a manifold from an abstract open book, where we will have something called the {\em thickened binding}. This construction will work as follows:

\begin{definition}
A manifold constructed from the abstract open book $Open(\Sigma, \phi)$ with {\em thickened binding} is the quotient of the disjoint union of the mapping torus $\Sigma \times [0,1]/ ((x,0) \sim (\phi(x),1))$ and the thickened binding $\partial \Sigma \times D^2$, under the identification $(x, t) \sim (x, 1, t)$, where $x \in \partial \Sigma$, and $\{(x,r,\theta) \mid r \in [0,1], t \in \rmodz\}$ are the coordinates on $\partial \Sigma \times D^2$.
\end{definition}

\begin{figure}[htb]{\tiny
\begin{overpic}[width=\textwidth,tics=10] 
{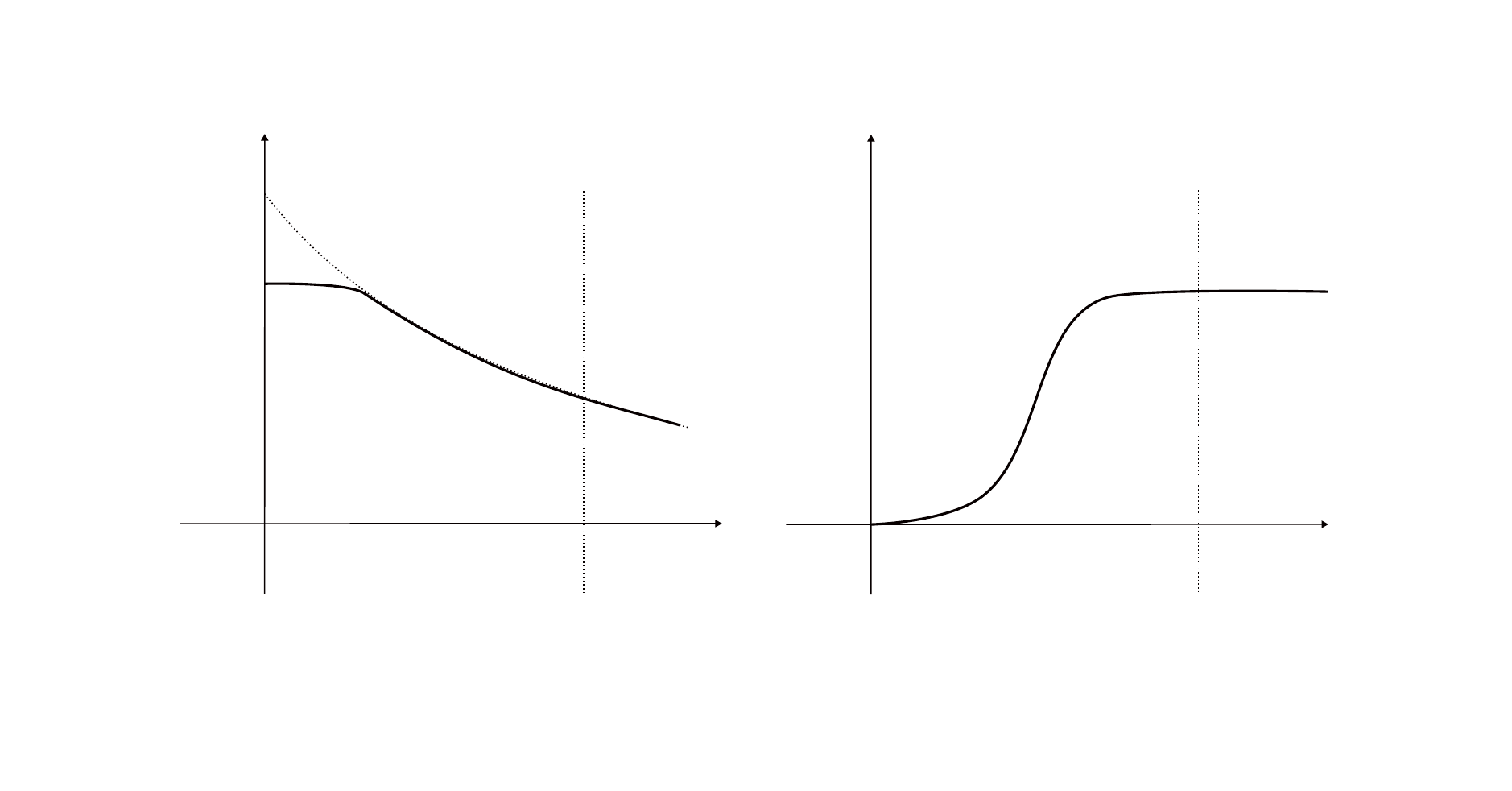}
\put(70,150){$h_1$}
\put(180,70){$\frac{1}{2}$}
\put(257,150){$h_2$}
\put(370,70){$\frac{1}{2}$}
\put(220,70){$r$}
\put(407,70){$r$}
\end{overpic}}
\caption{The functions $h_1$ and $h_2$ used to describe the contact form on an open book near the overlap between the pages and the binding -- $h_1$ has exponential drop-off while $h_2$ is quadratic near 0, and constant near 1.}
\label{fig:obd-forms}
\end{figure}

An open book with thickened binding can be given a compatible contact structure, as shown in Section 2.2 of  \cite{vanKoert17}. In particular, the contact form near the overlap region between the pages and the binding has the form
\[
h_1(r)\lambda|_{\partial \Sigma} + h_2(r)d\theta
\]
where $\lambda|_{\partial \Sigma}$ is the restriction of a Liouville form on $\Sigma$ preserved under the monodromy of the open book, and $(r,\theta)$ are the coordinates on $D^2$ where the binding is $\partial \Sigma \times D^2$. Every contact manifold has a supporting open book decomposition, by work of Giroux. Further, the contact structure supported by an open book is unique upto isotopy, as said by the next theorem, due to Giroux.

\begin{theorem}[Giroux]\label{thm:openbook}
If an open book $(\Sigma, \lambda, \phi)$ supports a contact structure $(M,\xi_1)$, and $\xi_2$ is another contact structure on $M$ supported by an open book whose pages are symplectomorphic to $\Sigma$ and the monodromy is isotopic through symplectomorphisms to $\phi$, then $\xi_1$ and $\xi_2$ are contactomorphic.
\end{theorem}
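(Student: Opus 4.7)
The plan is to reduce both $(M,\xi_1)$ and $(M,\xi_2)$ to a common abstract model built from $(\Sigma,\lambda,\phi)$ and then appeal to Gray stability. The first step is to show that the contactomorphism type of the thickened-binding construction $Open(\Sigma,\phi)$ depends only on the Liouville form $\lambda$ and the symplectomorphism $\phi$, not on the auxiliary choices (cutoff functions interpolating between the mapping-torus contact form and the binding-neighborhood contact form, collar parameterizations, etc.) used in the recipe of Section 2.2 of \cite{vanKoert17} to upgrade the abstract data to a concrete contact form. Any two admissible sets of such choices can be connected by a smooth path of admissible choices, producing a smooth one-parameter family of contact structures on the fixed smooth mapping torus; Gray stability then supplies the desired contactomorphism.

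Next I would handle the variation of the monodromy. Let $\phi_t$ be a path of symplectomorphisms of $\Sigma$, compactly supported in the interior, with $\phi_0=\phi$ and $\phi_1=\phi'$ (the monodromy of the open book supporting $\xi_2$, pulled back via the given page symplectomorphism). The mapping tori $M_t \coloneqq (\Sigma\times[0,1])/((x,0)\sim(\phi_t(x),1))$ are smoothly diffeomorphic: the time-dependent vector field generating $\phi_t\circ\phi_0^{-1}$, extended by zero near $\partial\Sigma$ and suitably cut off in the $[0,1]$-direction, integrates to a smooth family of diffeomorphisms $F_t:M_0\to M_t$. Pulling back via $F_t$ the contact structures constructed in the previous step on each $M_t$ yields a smooth family $\eta_t$ of contact structures on the fixed smooth manifold $M_0$, and Gray stability again provides a contactomorphism $(M_0,\eta_0)\to(M_0,\eta_1)$.

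Finally, I would identify the embedded open books on $M$ with these abstract models. The standard contact normal form for codimension-two contact submanifolds with trivial conformal symplectic normal bundle identifies a neighborhood of each binding with $B\times D^2$ carrying a standard contact form, and the complementary Liouville fibration over $S^1$ is, up to contact isotopy fixing the binding, determined by the page and its symplectic monodromy. This matches $(M,\xi_i)$ with the abstract open book built from the corresponding page and monodromy. Combining the two previous paragraphs, both $\xi_1$ and $\xi_2$ are contactomorphic to $Open(\Sigma,\phi)$, and hence to each other.

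The main obstacle is the independence-of-choices step: one must verify that the specific contact form produced on the thickened-binding model varies through \emph{contact} forms (not merely one-forms) as the auxiliary interpolation data vary, since the form degenerates along the binding and it is precisely there that different conventions interact. Once this smoothness-through-contact-forms is in hand, Gray stability converts each homotopy into a contactomorphism, and the remainder of the argument is bookkeeping of the identifications between embedded and abstract data.
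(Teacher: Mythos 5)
The paper does not prove this theorem; it attributes it to Giroux and refers the reader to \cite{vanKoert17} for a proof. Your proposal is essentially the standard argument given there (and in Giroux's original lectures): reduce to the abstract thickened-binding model, observe that the space of auxiliary data (cutoff and interpolation functions) used in the Thurston--Winkelnkemper-type construction is path-connected so that Gray stability handles independence of choices, absorb a symplectic isotopy of the monodromy into a diffeomorphism of the mapping torus together with a second application of Gray stability, and identify the embedded open book with the abstract one via a normal form near the binding. This is the same route the paper implicitly endorses, so no comparison of approaches is needed.

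One small point worth making explicit in your write-up: for the monodromy step you need the isotopy $\phi_t$ to be through \emph{compactly supported} symplectomorphisms so that your cut-off vector field is well-defined and the $M_t$ are genuinely mapping tori of open books. In the open book framework this is standard --- the monodromy and its isotopies are always taken to be identity near $\partial\Sigma$ --- but it should be stated rather than assumed, since the phrase ``isotopic through symplectomorphisms'' in the theorem statement does not by itself say so. Similarly, when you say the family of forms on the fixed mapping torus stays nondegenerate as the interpolation data vary, it is worth observing that this follows because the space of admissible interpolation data is convex (so one can take affine paths), which keeps the defining inequalities strict along the whole path.
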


An abstract open book defines a supporting open book for the corresponding contact manifold. This follows from work of Thurston-Winkelnkemper \cite{ThurstonWinkelnkemper75} and Giroux. The reader can refer to  \cite{vanKoert17} for a proof (originally by Giroux), and more details. For open books, by a {\em page} we refer to $\Sigma$ for abstract open books, and to the closure of a fiber of $\nu$ for embedded ones. In the manifold built from the abstract open book, the equivalence class $[(x,t)]$ for $x \in \partial \Sigma$ is the binding. In the embedded case, $B$ is the binding. In the thickened binding case, $\partial \Sigma \times D^2$ is the binding. In the embedded case, as $M-B$ has the structure of a fibration over $S^1$, it makes sense to talk about the {\em monodromy} of an open book. In the abstract setting, $\phi$ is called the monodromy.

\subsubsection{Generalised Dehn Twist}

Suppose $(W, \omega)$ is a symplectic manifold with an embedded Lagrangian sphere $L \subset W$. A neighbourhood $\nu_W(L)$ is symplectomorphic to a neighbourhood of the zero section of the canonical symplectic structure on $(T^*S^n, d \lambda_{can})$, by the Weinstein neighbourhood theorem. The cotangent bundle of the $n$-sphere $T^*S^n$ can be regarded as a submanifold of $\mR^{2n+2}$ as the set $\{(p,q) \in \mR^{n+1}\times\mR^{n+1} \mid q\cdot q = 1, q \cdot p = 0\}$. In these coordinates, $\lambda_{can} = pdq$. Define an auxiliary map describing the normalised geodesic flow
\[
\sigma_t(q,p) = \begin{pmatrix} \cos t &|p|^{-1}\sin t\\ -|p|\sin t &\cos t \end{pmatrix} \begin{pmatrix} q\\ p\end{pmatrix}
\] 
Then define 
\[ \tau(q,p) = \begin{cases} 
      \sigma_{g_1(|p|)} &  p\neq 0 \\
     -Id & p=0 \\
   \end{cases}
\]

\noindent
$g_1$ is a smooth map as graphed in Figure~\ref{fig:twist}. Since $\tau$ is identity outside a neighbourhood of the Lagrangian $\{p=0\}$, it can be extended to all of $(W, \omega)$ by the identity and defines a symplectomorphism. The map $\tau$ is called the generalised Dehn twist about the Lagrangian sphere $L$.

\begin{figure}[htb]{\tiny
\begin{overpic}[width=0.25\textwidth,tics=10] 
{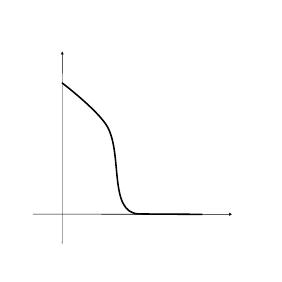}
\put(20,80){$\pi$}
\put(18,95){$g_1$}
\put(93,21){$|p|$}
\end{overpic}}
\caption{The function $g_1$ parametrising a Dehn twist.}
\label{fig:twist}
\end{figure}

\subsubsection{Stabilisation of open books}

Given a contact open book $M =$ Open $(\Sigma^{2n}, \phi)$, suppose $L$ is an embedded Lagrangian $n$-disk in the page $\Sigma$ whose boundary $\partial L$ is a Legendrian sphere in the binding. Consider $\widetilde{\Sigma}$ to be the manifold obtained by attaching a Weinstein $n$-handle to $\Sigma$ along $\partial L$. Then, call $L_S$ the Lagrangian sphere in $\widetilde{\Sigma}$ defined by the union of $L$ and the core of the $n$-handle.

\begin{definition}\label{def:stab}
The contact open book $\tilde{M} \coloneqq$ Open $(\tilde{\Sigma}, \phi \circ \tau_{L_S})$, where $\tau_{L_S}$ is the Dehn twist along $L_S$, is called the {\em stabilisation} of Open$(\Sigma, \phi)$ along $L$.
\end{definition}

The following is a well-known statement due to Giroux. A proof can be found in \cite{vanKoert17}.

\begin{proposition}\label{prop:stabilise}
The stabilisation of a contact open book Open$(\Sigma, \phi)$ along a Lagrangian disk $L$ bounding a Legendrian sphere in $\partial \Sigma$ is contactomorphic to the contact manifold Open$(\Sigma, \phi)$.
\end{proposition}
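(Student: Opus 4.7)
The plan is to localise the stabilisation inside a contact ball neighbourhood of the Lagrangian disk $L$ and show that the modification, although it changes the page and twists the monodromy, does not change the underlying contact ball piece; then the rest of the contact manifold glues back in the same way. This mirrors the strategy behind Theorem~\ref{thm:stabilise} and uses the relative open book technology of Section~\ref{subsubsec:relative}.

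First I would fix a tubular neighbourhood $N$ of $L$ inside $M = \text{Open}(\Sigma, \phi)$. Following Example~\ref{eg:ball}, $N$ can be identified with a standard contact ball $B^{2n+1}$ equipped with its associated relative open book, whose pages track $L$ as one sweeps through the ambient open book. The stabilisation is supported inside $N$ (and inside a slightly enlarged neighbourhood of $L_S$), while outside $N$ the open book data is unchanged.

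Next I would examine the corresponding neighbourhood $\tilde{N}$ of $L_S$ inside $\tilde{M} = \text{Open}(\tilde\Sigma, \phi \circ \tau_{L_S})$. Its pages are obtained from those of $N$ by attaching a Weinstein $n$-handle along $\partial L$, and the monodromy is twisted by $\tau_{L_S}$, which I would arrange to have compact support inside $\tilde{N}$. The key claim is that $\tilde{N}$ is again contactomorphic to a standard contact ball, with the induced contact structure on $\partial \tilde{N}$ matching that on $\partial N$. The underlying intuition is that the attached $n$-handle enlarges the page topologically, but the generalised Dehn twist along the new Lagrangian sphere $L_S$ exactly compensates for this in the monodromy; on the filling side this corresponds to adding a cancelling pair of Weinstein handles to the symplectisation, whose upper boundary is therefore contactomorphic to a standard ball. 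To make this precise I would combine the flat Weinstein model of Section~\ref{surgery} with the explicit formula for $\tau_{L_S}$, building an explicit contactomorphism $\tilde{N} \to N$ which is the identity near $\partial \tilde{N} = \partial N$.

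With $N$ and $\tilde{N}$ identified as contact balls with matching boundary relative open books, Lemma~\ref{lem:gluing} assembles the global contactomorphism: $M$ and $\tilde{M}$ are obtained by gluing identical external pieces onto contactomorphic ball pieces along the same boundary data. The main obstacle is the local claim that $\tilde{N}$ is a standard contact ball. Writing down an explicit contactomorphism is somewhat delicate because one must simultaneously track the handle attaching coordinates, the support of $\tau_{L_S}$, and the Liouville forms on both sides; the cleaner cancelling-pair argument avoids a formula but requires one to realise $\tilde{M}$ as the boundary of a Weinstein cobordism built on top of a filling of $M$ and to check that the newly attached handles are in Weinstein cancellation position, which involves verifying framing and conformal symplectic normal bundle data. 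This matching is exactly where the Dehn twist factor in Definition~\ref{def:stab} is needed and is the heart of the proof.
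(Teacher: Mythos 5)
The paper does not actually supply a proof of this proposition; it is attributed to Giroux and the reader is referred to van Koert's lecture notes \cite{vanKoert17}. The argument there (as with most presentations of Giroux's stabilisation lemma) is a \emph{global} one: either one observes that the stabilised open book is the Murasugi sum (plumbing) of $\mathrm{Open}(\Sigma,\phi)$ with $\mathrm{Open}(D(T^*S^n),\tau)$, and that Murasugi sum with the standard sphere is a contact connected sum with $(S^{2n+1},\xi_{st})$; or one realises $\mathrm{Open}(\tilde\Sigma,\phi\circ\tau_{L_S})$ as the upper boundary of a Weinstein cobordism on $M$ obtained by attaching an index-$n$ handle along $\partial L$ followed by a critical handle along $L_S$, and shows those two handles are in cancelling position. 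Your proposal instead attempts a \emph{localisation}: cut out a ball neighbourhood $N$ of $L$, argue that the stabilisation only changes things inside $N$, and show the modified ball $\tilde N$ is again standard with matching boundary data. This is actually the strategy the paper reserves for the stronger Theorem~\ref{thm:stabilise}, and you should be aware that the paper's proof of that theorem explicitly invokes Proposition~\ref{prop:stabilise} at the end (``By Proposition~\ref{prop:stabilise}, we will conclude...''). So if your route to the key local claim were to quietly pass through the proposition again, it would be circular; you avoid that formally by offering the flat-Weinstein-model computation or the cancelling-pair computation as the engine, but then the localisation scaffolding contributes nothing that the global cancelling-pair argument does not already give you more cheaply.

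The more serious issue is that the heart of the argument is left undone, and it is precisely the part that is hard. Two distinct things need to be established and you assert rather than prove both. First, the claim that ``outside $N$ the open book data is unchanged'' is not an observation: the handle attached to $\Sigma$ changes every page and the Dehn-surgered boundary $\partial\tilde\Sigma$ changes the binding globally, so realising $\tilde M\setminus\tilde N$ as something symplectomorphic/contactomorphic to $M\setminus N$ already requires the kind of careful bookkeeping of pages, binding collars, and Liouville forms that occupies Section~\ref{sec:stabilise}. Second, even granting that $\tilde N$ is abstractly a contact ball, the contactomorphism $\tilde N\to N$ must respect the induced \emph{relative} open book structures on the boundary spheres, and these differ by the ``shift in indices''/twisting that the paper spends several pages setting up (the transition from $N_L\cup N_{\partial L}$ to $N_{\partial S}\cup N_S$, the rotation $(x,y,t,s)\mapsto(-y,x,t+0.5,-s)$, and the verification that the resulting fibration is an honest open book). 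Writing ``which is the identity near $\partial\tilde N=\partial N$'' elides exactly this matching, and the matching is where the Dehn twist in Definition~\ref{def:stab} actually enters. If you want a clean proof of the proposition as stated, the global handle-cancellation or Murasugi-sum argument is the right tool, and you would need to actually verify the cancellation position (intersection number one of the attaching sphere of the critical handle with the belt sphere of the $n$-handle, plus the framing and conformal symplectic normal bundle data you mention) rather than defer it.
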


In Section \ref{sec:isotopy}, we use the following folklore theorem (refer \cite{vanKoert17} for details), that doing Legendrian surgery on a Legendrian sphere that lives on a page is the same as changing the monodromy by a Dehn twist about that sphere.

\begin{theorem}\label{thm:monodromy}
Let Open$(\Sigma, \phi)$ be a contact open book with a Legendrian sphere $L_S$, which is also a Lagrangian sphere in $\Sigma$. Denote the contact manifold obtained from Open$(\Sigma, \phi)$ by Legendrian surgery along $L_S$ by $\widetilde{\text{Open}(\Sigma, \phi)}_{L_S}$. Then, the contact manifolds
\[
\text{Open}(\Sigma, \phi \circ \tau_{L_S}) \simeq \widetilde{\text{Open}(\Sigma, \phi)}_{L_S}
\]
are contactomorphic.
\end{theorem}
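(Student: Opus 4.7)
The plan is to reduce the claim to a local calculation near $L_S$. Outside a neighborhood of $L_S$, the contact manifolds $\text{Open}(\Sigma, \phi \circ \tau_{L_S})$ and $\text{Open}(\Sigma, \phi)$ are identical, since $\tau_{L_S}$ is supported in a Weinstein neighborhood of $L_S$ in $\Sigma$. Likewise, the Legendrian surgery producing $\widetilde{\text{Open}(\Sigma, \phi)}_{L_S}$ only modifies a standard neighborhood of $L_S$ in the contact manifold and leaves the complement unchanged. So it suffices to produce a contactomorphism between the two local models of the modified neighborhoods which restricts to the identity (or a suitable canonical identification) on the boundary shared with the unchanged complement.

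First I would set up the local model on the surgery side. Using Lemma~\ref{lem:std}, a standard neighborhood of $L_S$ in $\text{Open}(\Sigma, \phi)$ is contactomorphic to the flat Weinstein model $S_{-1} \subset (\mR^{2n+2}, \omega_0)$ from Section~\ref{surgery}. By construction of Legendrian surgery, this neighborhood is cut out and replaced by the contact hypersurface $S_1 = \{f(w^2) - g(z^2) = 0\}$, so the relevant local piece of $\widetilde{\text{Open}(\Sigma, \phi)}_{L_S}$ is contactomorphic to $S_1$.

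Second, on the Dehn twist side, I would describe a neighborhood of the image of $L_S$ in $\text{Open}(\Sigma, \phi \circ \tau_{L_S})$. A Weinstein neighborhood of $L_S \subset \Sigma$ is symplectomorphic to a disk cotangent bundle $D T^*S^n$ on which $\tau_{L_S}$ is compactly supported, and the corresponding region of the contact manifold is the mapping torus
\[
(D T^*S^n \times [0,1]) / ((x,1) \sim (\tau_{L_S}(x),0))
\]
carrying the natural contact structure assembled from the Liouville form on $D T^*S^n$ and the angular coordinate. The goal is then to identify this mapping torus with $S_1$ as contact manifolds, compatibly with the gluing to the common complement.

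The heart of the argument is to construct an $S^1$-valued fibration on $S_1$ whose fibers are symplectomorphic to $D T^*S^n$ and to show that the monodromy of this fibration is precisely $\tau_{L_S}$. Using the functions $f, g$ from Section~\ref{surgery}, the defining equation of $S_1$ provides a natural projection to $S^1$ whose fibers can be identified with $DT^*S^n$; tracking the Liouville dynamics $X = 2z\partial_z - w\partial_w$ along such a fiber around the $S^1$-direction reduces to a parallel-transport computation whose outcome is the normalized geodesic flow $\sigma_{g_1}$, which is exactly the generalised Dehn twist $\tau_{L_S}$ as described before Definition~\ref{def:stab}. Once this is established, Theorem~\ref{thm:openbook} (Giroux uniqueness) upgrades the local matching to a genuine contactomorphism, completing the argument. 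The main obstacle is precisely this parallel-transport identification: verifying that the natural $S^1$-fibration of $S_1$ has monodromy equal to $\tau_{L_S}$ rather than some isotopic perturbation requires a careful concrete computation in the flat Weinstein model. A slicker alternative would be to build a Lefschetz-type fibration on the Weinstein cobordism associated to the handle attachment with $L_S$ as its unique vanishing cycle and invoke the classical fact that boundary monodromy changes by a Dehn twist about the vanishing cycle, but this route ultimately reduces to the same local identification.
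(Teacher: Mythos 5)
The paper does not actually prove Theorem~\ref{thm:monodromy}; it records it as a folklore result and points to \cite{vanKoert17} for a proof, so there is no in-paper argument for you to have matched or diverged from. Judged on its own terms, your proposal has a genuine gap at the step that was meant to carry the whole argument. You want to identify the surgery piece $S_1$ (really the truncated region $S_{1,c}=S_1\cap\{|z|^2\le 1+\epsilon\}$ that replaces $S_{-1,c}$) with the mapping torus $\bigl(D(T^*S^n)\times[0,1]\bigr)/\bigl((x,1)\sim(\tau_{L_S}(x),0)\bigr)$ by exhibiting an $S^1$-valued fibration of $S_1$ with fiber $D(T^*S^n)$ and monodromy $\tau_{L_S}$. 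No such fibration exists: on the truncated region the equation $f(|w|^2)=g(|z|^2)$ determines $|z|$ from $|w|$, so $(z,w)\mapsto(z/|z|,\,w)$ exhibits $S_{1,c}\cong S^n\times D^{n+1}$, which is simply connected for $n\ge2$, whereas any mapping torus of $D(T^*S^n)$ fibers over $S^1$ and so has infinite $\pi_1$. The two pieces are not even diffeomorphic, and consequently ``the common complement'' you invoke is also not common: the surgery removes only the small ball $S_{-1,c}\cong D^{n+1}\times S^n$, which meets just the handful of pages near the one carrying $L_S$, while your mapping torus sweeps out the Weinstein neighbourhood of $L_S$ in \emph{every} page.

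The missing idea is that the comparison cannot be a direct local match of $S_{1,c}$ against a Dehn-twist mapping torus. One must instead show how the open book fibration $\nu$, defined on the complement of $S_{-1,c}$, extends across the glued-in $S_{1,c}$, and read off from that extension that the monodromy of the resulting open book picks up a $\tau_{L_S}$ factor --- or, equivalently, realise the Weinstein handle cobordism as a Lefschetz fibration over $D^2$ with unique vanishing cycle $L_S$ and appeal to the standard monodromy-around-a-critical-value computation. That second option, which you mention at the end as a ``slicker alternative,'' is in fact the correct and standard route (and is essentially what \cite{vanKoert17} does); it does not reduce to your fibration claim, it replaces it. A smaller but related issue: ``parallel transport of the Liouville dynamics $X=2z\partial_z-w\partial_w$'' is not well-posed, since $X$ is transverse to the contact hypersurface $S_1$ and does not restrict to a flow on it, and in any case parallel transport in a symplectic (or Lefschetz) fibration is governed by the symplectic connection, not by the ambient Liouville vector field.
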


\subsection{Generating function Legendrians} A useful way to describe Legendrian submanifolds in 1-jet spaces is by using generating functions. Given $f \in C^{\infty}(Y \times \R^m)$, with $x$ being the $Y$-coordinates and $z$ being the $\R
^m$ coordinates, the critical locus of $f$ is the set 
\[
\Sigma_f \coloneqq \{(x,z) \mid \partial_z(f(x,z)) = 0\}
\]

Then the critical locus embeds into $J^1(Y)$ as a Legendrian submanifold that we will denote $j^1(f)$, where
\[
j^1(f) = \{(f(x,z),x,-\partial_x(f(x,z))) \mid (x,z) \in \Sigma_f\}
\]

It will be useful to understand how to read the coordinates of Legendrians described by generating functions in $J^1(S^n)$, which we will address in the following lemma.

\begin{lemma}\label{lem:function}
Consider $J^1(S^n)$ parametrised as $\{(z,q,p) \mid q^2=1, p \cdot q=0\}$. Given a function $f \in C^{\infty}_{\mR}(S^n \times \R)$, the critical locus $j^1(f)$ defines a Legendrian in $J^1(S^n)$, whose coordinates are given by $(z,q,p)$ such that $z = f(q)$ and $p_i =- \frac{\partial f}{\partial q_i} + (df \cdot q)q_i$, where $df$ is the vector given by $(df)_i = \frac{\partial f}{\partial q_i}$.
\end{lemma}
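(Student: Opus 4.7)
The plan is to verify directly that the parametric section $\iota: q \mapsto (f(q), q, p(q))$, with $p_i(q) = -\partial f/\partial q_i + (df \cdot q)\, q_i$, lands in $J^1(S^n)$ and is Legendrian for the inherited contact form, and then to identify it with the 1-jet of $f$. I would first fix the ambient conventions: the space $J^1(S^n)$ sits inside $\mathbb{R}^{2n+3}$ cut out by $|q|^2 = 1$ and $p \cdot q = 0$, and the paper's contact form $dz + pdq$ here means the restriction of $\alpha = dz + \sum_{i=1}^{n+1} p_i\, dq_i$ from the ambient Euclidean space; this restriction is the correct contact form on $J^1(S^n)$ because $\sum p_i\, dq_i$ pulls back to the canonical 1-form on $T^*S^n$ under the standard inclusion $T^*S^n \hookrightarrow T^*\mathbb{R}^{n+1}$.

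Next I would check the two required conditions. The constraint $p(q) \cdot q = 0$ is a one-line computation using $|q|^2 = 1$, and it also has a geometric interpretation: $-p(q)$ is the orthogonal projection of the ambient gradient $(\partial f/\partial q_i)_i$ onto $T_qS^n$, which is exactly how one represents the intrinsic covector $df_q|_{T_qS^n}$ inside the constrained model of $T^*S^n$. For the Legendrian condition, I would compute $\iota^*\alpha$: since $z = f(q)$ forces $dz = \sum (\partial f/\partial q_i)\, dq_i$, the pullback reduces to $(df \cdot q)\sum q_i\, dq_i$, and this vanishes on $TS^n$ because $\sum q_i\, dq_i = \tfrac{1}{2} d(|q|^2) = 0$ along $S^n$.

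Finally, I would note that pairing $p(q)$ with any $v \in T_qS^n$ (so $v \cdot q = 0$) gives $-df_q(v)$, so under the sign convention $\alpha = dz + pdq$ the section $\iota$ does encode the intrinsic 1-jet of $f$. The only real obstacle is bookkeeping: keeping careful track of the minus sign in the formula (which is forced by the $+$ in $dz + pdq$ rather than the more common $dz - pdq$ convention) and being consistent about identifying $T_q^*S^n$ with $\{p \in \mathbb{R}^{n+1} : p \cdot q = 0\}$ via the ambient Euclidean metric. Once those conventions are pinned down at the outset, the three computations above are essentially forced and no deeper argument is needed.
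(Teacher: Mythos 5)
Your proposal is correct and follows essentially the same route as the paper: the paper's proof also hinges on identifying $T^*_qS^n$ inside $T^*_q\mathbb{R}^{n+1}$ via orthogonal projection of the ambient $df$, which is exactly your geometric interpretation of the formula for $p$. You spell out two computations the paper leaves implicit --- the check that $p(q)\cdot q=0$, and the Legendrian condition $\iota^*(dz+p\,dq)=(df\cdot q)\sum q_i\,dq_i=0$ on $S^n$ --- and you make explicit that the minus sign in $p_i$ is forced by the paper's $dz+p\,dq$ convention, a point the paper's two-sentence proof glosses over; this is useful bookkeeping, but it is a more careful writeup of the same argument rather than a different one.
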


\begin{proof}
The idea here is simply that when a function $f$ is defined on the coordinates $(q_1, \dots, q_{n+1}) \in \mR^{n+1}$, the vector $df |_{\mR^{n+1}} \coloneqq (\frac{\partial f}{\partial q_1}, \dots, \frac{\partial f}{\partial q_{n+1}})$ belongs to $T^*(\mR^{n+1})$. Thus for $f \in C^{\infty}_{\mR}(S^n)$ but defined on the coordinates $(q_1, \dots q_{n+1})$, $df \in T^*(S^n)$ is given by the projection of $df |_{\mR^{n+1}}$ from  $T^*(\mR^{n+1})$ to $T^*(S^n)$. 
\end{proof}

In particular, if the function $f$ has the property that $\frac{\partial f}{\partial q_i} = 0$ $\forall i \neq n+1$, the $p$ - coordinates, in the cotangent directions, become $p_i = \frac{\partial f}{\partial q_{n+1}}q_{n+1}q_i$ for $i =1,\dots, n$, and $p_{n+1} = \frac{\partial f}{\partial q_{n+1}}(q_{n+1}^2 - 1)$. Thus $p^2 \coloneqq \sum p_i^2 = (\dfrac{\partial f}{\partial q_{n+1}})^2 (1-q_{n+1}^2)$.

\section{The isotopy to the unknot}\label{sec:isotopy}

In this section, we will first rigorously define the constructions $S_{join}$ and $S_{stab}$ in open books supporting $(M, \xi)$. Then, we shall establish an isotopy between them to the standard unknot.

\begin{figure}[htb]{\tiny
\begin{overpic}[width=0.5\textwidth,tics=10] 
{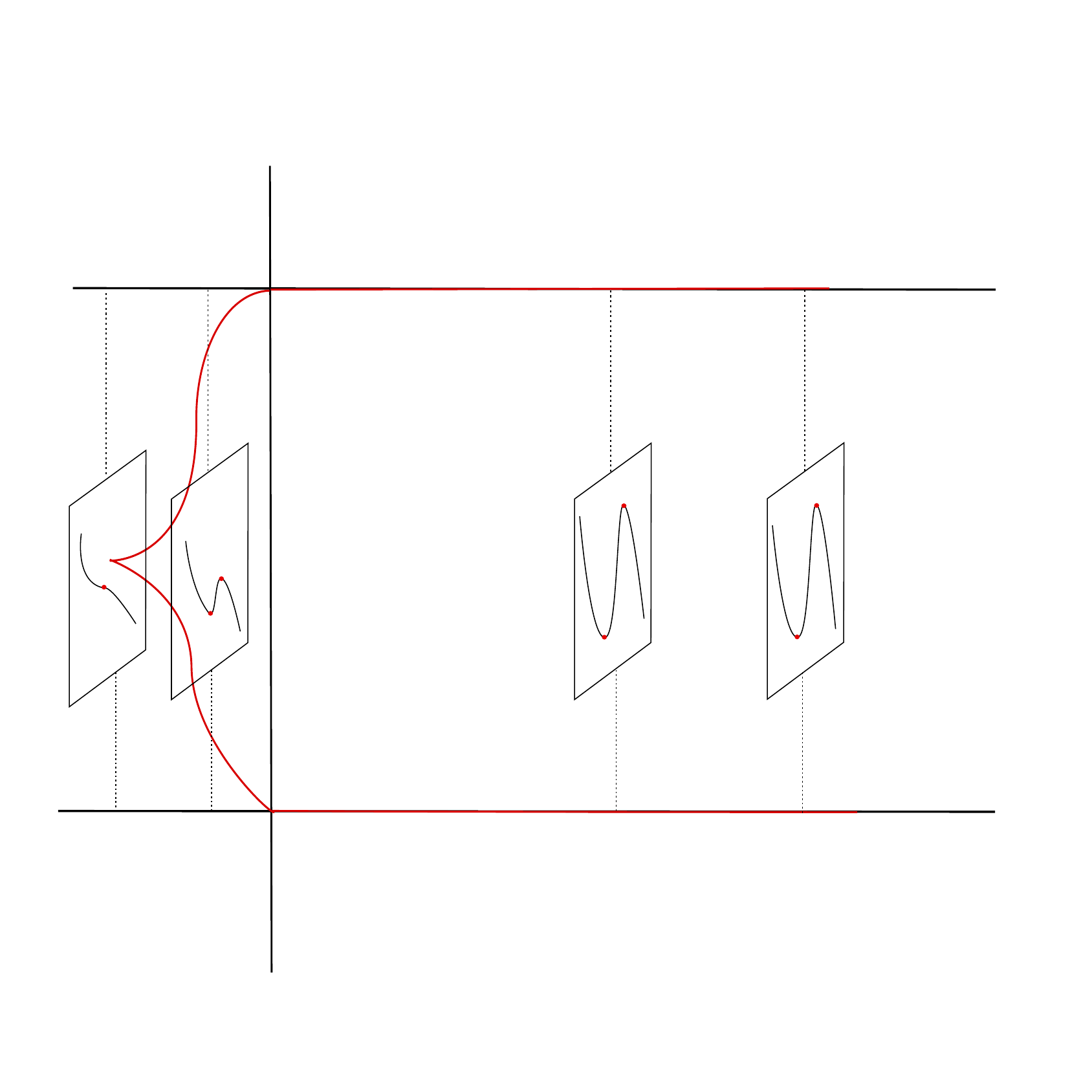}

\end{overpic}}
\caption{A schematic of $S_{join}(L)$ described in a 1-jet neighbourhood of the Legendrian disk $L$, using generating functions whose $q$-slices are shown.}
\label{fig:join}
\end{figure}

\subsection{Parametric definitions of the spheres}\label{ss:spheres}

We are given an open book $(\Sigma, \phi)$ supporting $(M,\xi)$, and a Lagrangian disk $L$ properly embedded in $\Sigma$. By adapting Lemma 4.2 of \cite{vanKoert17}, we can assume that $L$ is a Legendrian in $(M,\xi)$. We can further assume that a neighbourhood of $L$ in $(M,\xi)$ is parametrised as $J^1(D^n) = \{(z,q_1,\dots,q_n,p_1,\dots,p_n)) \mid q \in D^n, p \in \R^n\}$ with the contact form $dz+pdq$, where the $z$ direction gives the fibration direction of the open book. Then $S_{join}$ can be seen to be the union of two disks using generating functions as in Figure~\ref{fig:join}, where each of the disks are given by following one of the critical points in the $q$-slices.

On the other hand, when the open book $(\Sigma, \phi)$ is stabilized along $L$, the sphere $S_{stab}(L)$ can be defined as the union of $L$ and the core of the $n$-handle attached to the page. Parametrically, we can assume that $S_{stab}(L)$ is the 0-section of the $J^1(S^n)$ neighbourhood of this sphere, such that the cotangent directions coincide with the page and the $z$-direction coincides with the fiber direction of the open book.

\subsection{Proof of Theorem~\ref{thm:isotopy}}\label{ss:stab}

We first start with an elementary lemma which is known to experts. We give a proof for the reader's convenience.

\begin{lemma}\label{lem:unknot} Let $S$ be a Legendrian embedding of an $n$-sphere into a connected contact manifold $(M, \xi)$. If $S$ can be described as  two Legendrian $n$-disks $D_1$ and $D_2$ joined at their boundary $\Sigma$, such that $D_2$ can be isotoped rel-boundary to $D_1$, then $S$ is Legendrian isotopic to the standard Legendrian unknot.
\end{lemma}

\begin{proof}
    The above is true if $D_1$ lies in the 0-section of the Darboux neighbourhood $J^1(D^n)$. To prove the lemma we need to show that any two Legendrian embeddings of $D^n$ into $(M,\xi)$ are Legendrian isotopic. Suppose the two embeddings are $h$ and $g$.
    By scaling and isotopy, we can first assume that both $h(D^n)$ and $g(D^n)$ live inside a Darboux neighbourhood and $h(0) = g(0) = p$. By an isotopy of $h$ to $h'$, we can then ensure that the tangent spaces of $h'(D^n)$ and $g(D^n)$ at 0 agree. Since $h'$ and $g$ are both described by smooth mappings of the tangent bundle $D^n$ into $T\xi$, $h'$ can be further isotoped so that they agree at all points. 
\end{proof}

The proof that $S_{join}(L)$ is isotopic to the standard unknot follows directly from its parametric definition above. We can isotope the generating functions to ensure the critical points in the $q$-slices are brought arbitrarily close to each other. Then by applying Lemma~\ref{lem:unknot}, we are done.

We now need to show that in the stabilised open book, $S_{stab}(L)$ is Legendrian isotopic to the standard unknot. There will be two main steps to the isotopy:
\begin{enumerate}
    \item Isotopy through Legendrian surgery
    \item Isotopy through belt-sphere of subcritical handle
\end{enumerate}



\subsubsection{First step of isotopy} \label{sss: step1}

Let $(M, \xi)$ be the contact manifold supported by the open book $\nu$, such that $S_{stab}(L)$ can be defined by stabilizing $\nu$ along the boundary of the Lagrangian disk $L$ in a page of $\nu$. Let $(M',\xi')$ denote the contact manifold supported by the open book $\nu'$, whose pages are obtained by pages of $\nu$ with an $n$-handle attached, and the monodromy is that of $\nu$ extended over the $n$-handle by identity. Let $S_{pre-stab}(L)$ denote the Legendrian lift of the exact Lagrangian sphere formed by $L$ and the core of the $n$-handle. We can assume that $S_{pre-stab}(L)$ lives on a page of $\nu'$. Recall from section~\ref{sec:contact} that the stabilised open book $\nu_{stab}$ supporting $(M,\xi)$ is obtained by a Legendrian surgery along $S_{pre-stab}(L)$. 

By the description of Legendrian surgery in section~\ref{sec:contact}, that means in the final step, we identify a neighbourhood of $S_{pre-stab}(L)$ with $S_{-1} \subset \R^{2n+2}$, and replace $S_{-1}$ with $S_1 \subset \R^{2n+2}$. To make our argument easier, we will use $S_1^{st} = \{(z,w) \mid |z|^2 = 1+\epsilon\}$. Using this, we first consider a ``non-smooth'' version of Legendrian surgery where the set $S_{-1,\epsilon} \coloneqq \{(z,w) \in S_{-1} \mid |z|^2 \leq 1+\epsilon\}$ is replaced by the set $S_{1,\epsilon}^{st} \coloneqq \{(z,w) \in S_1^{st} \mid |w|^2 \leq 1\}$. Refer to Figure~\ref{fig:surgery} for a schematic.

The explicit identification between the standard neighbourhood of Lemma~\ref{lem:std} and $S_{-1}$ is given via the following contactomorphism:

\[
\psi_W: J^1(S^n) \to S_{-1}
\]
\[
(z,q,p) \mapsto (zq+p,q)
\]
So it follows that $\psi_W^{-1}(z_1,w_1) = (z_1.w_1, w_1, z_1 -(z_1.w_1)w_1)$. The explicit identification between the standard neighbourhood of Lemma~\ref{lem:std} and $S_{1}^{st}$ is given via the following contactomorphism:

\[
\psi: J^1(S^n) \to S_{1}^{st}
\]
\[
(z,q,p) \mapsto ((-\sqrt{1+\epsilon}q,p-zq/2)
\]

\begin{figure}[htb]{\tiny
\begin{overpic}[width=\textwidth,tics=10] 
{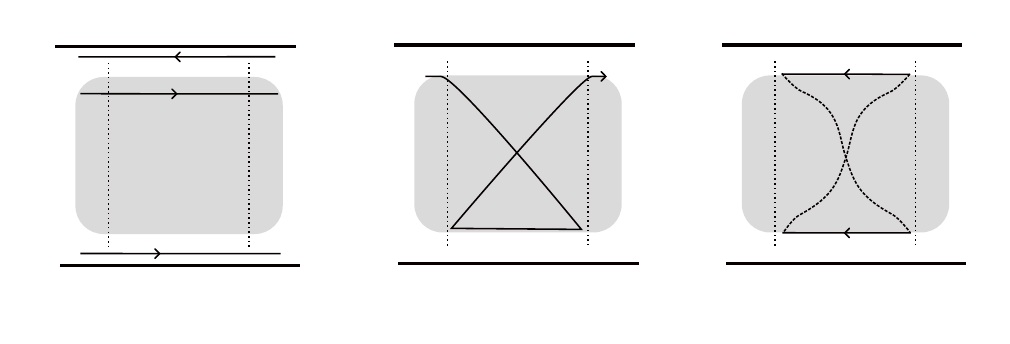}
\put(40,20){Isotopic copies of $S_{stab}(L)$}
\put(70,60){{\color{gray}$N_{surg}$}}
\put(230,60){{\color{gray}$N_{surg}$}}
\put(380,60){{\color{gray}$N_{surg}$}}
\put(200,20){A Reidemeister-1 move}
\put(320,20){After isotoping into $N \setminus N_{surg}$ by $\psi_W^{-1}\psi$}
\put(185,120){$L_{\epsilon,2}$}
\put(243,88){$M_{\epsilon,2,-2}$}
\put(272,50){$C_{\epsilon,-2}$}
\put(375,127){$L'$}
\put(375,85){$M'$}
\put(375,43){$C'$}
\end{overpic}}
\caption{The first step of the isotopy to move $S_{stab}(L)$ from $N$ into $N \setminus N_{surg}$. The vertical directions represents the $z$ coordinate in $J^1(S^n)$, while the horizontal represents $S^n$ in the $q_{n+1}$ direction. The region inside the dotted lines represent $\{q_{n+1} \leq 0\}$. The shaded region represents the part that is replaced during Legendrian surgery, which we denote in Section~\ref{sss: step2} as $N_{surg}$.}
\label{fig:isotopy-1}
\end{figure}

Locally, a neighbourhood of $S_{stab}(L)$ can be identified with $J^1(S^n)$ with coordinates $\{(z,q,p) \mid q^2=1\}$, where $S_{stab}(L)$ is given by $(0,q,0)$. Under the Reeb flow $\partial_z$ in these coordinates, we can see that this 0-section is isotopic to the sphere $(2,q,0)$. Under the map $\psi$, the image of this isotoped sphere is $(-\sqrt{1+\epsilon}q,-q) \subset S^{st}_{1,\epsilon} \cap S_{-1,\epsilon}$. Further under the map $\psi_W^{-1}$, the image of $(-\sqrt{1+\epsilon}q,-q)$ is $\{(\sqrt{1+\epsilon},-q,0)\}$. These observations allow us to visualize a neighbourhood of $S_{stab}(L)$, and its isotopic copies, as described in Figure~\ref{fig:isotopy-1}. We can also see that if we took the core of a 1-jet neighbourhood of $S_{pre-stab}(L)$, isotoped it to $\{c,q,0\}$ for $c>>0$ so that it is in $\psi_W^{-1}(S_{-1} \setminus S_{-1,\epsilon})$, and then performed the Legendrian surgery on the core, this sphere would be isotopic to the core of $\psi^{-1}(S_{1,\epsilon}^{st})$. Which essentially re-affirms our understanding that in $\nu_{stab}$, an isotopic copy of $S_{stab}(L)$ is described by $L \cup \text{core of } n\text{-handle}$ in any page. A schematic of these isotopic copies are shown in the leftmost image in Figure~\ref{fig:isotopy-1}.

Also, for later use, we record that in the standard 1-jet neighbourhood of $S_{stab}$, the image of $S_{-1,\epsilon} \cap S_{1,\epsilon}^{st}$ is given by
\[
\psi^{-1}(S_{-1,\epsilon} \cap S_{1,\epsilon}^{st}) = \{(z,q,p) \mid p^2+z^2/4 = 1\}
\]

As mentioned above, an isotopic copy of $S_{stab}(L)$ in its local 1-jet neighbourhood is parametrised as $\{c,q,0\}$, which is the union of the following pieces:
\begin{itemize}
\item $L_{c} \coloneqq \{(c,q,0) \mid q_{n+1} \geq 0 \}$
\item $C_{c} \coloneqq \{(c,q,0) \mid q_{n+1} \leq 0 \}$
\end{itemize}

Now we can do the first step of the isotopy of $S_{stab}(L)$ to the unknot.

\begin{figure}[htb]{\tiny
\begin{overpic}[width=\textwidth,tics=10] 
{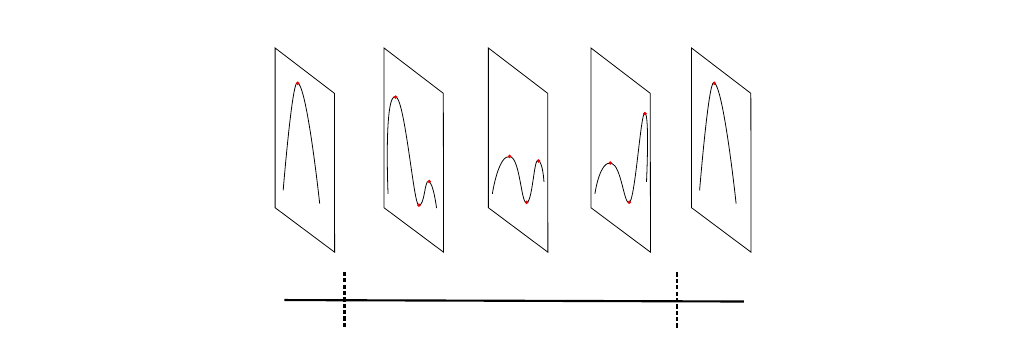}

\end{overpic}}
\caption{The $q_{n+1}$-slices  of the generating functions describing the final step of the first isotopy in Figure~\ref{fig:isotopy-1} using a Legendrian Reidemeister move.}
\label{fig:gen_function}
\end{figure}

Using an isotopy through generating function Legendrians, which corresponds to a Legendrian Reidemeister-1 move, we can isotope it to a sphere as in the middle image of Figure~\ref{fig:isotopy-1}. The generating function description of the sphere in the middle image is given in Figure~\ref{fig:gen_function}. Now, the sphere $S_{stab}(L)$ can be seen as the union of the following pieces:

\begin{itemize}
\item $L_{\epsilon, c} \coloneqq \{(c,q,0) \mid q_{n+1} \geq \epsilon \}$
\item $C_{\epsilon, d} \coloneqq \{(d,q,0) \mid q_{n+1} \leq -\epsilon \}$
\item $M_{\epsilon,c,d} \coloneqq \text{the complement of the above pieces}$
\end{itemize}

If we choose $c=2$ and $d=-2$, the pieces $L_{\epsilon,2}$ and $C_{\epsilon,-2}$ live on $\psi^{-1}(S_{-1,\epsilon} \cap S^{st}_{1,\epsilon})$. Thus, these can be seen in the boundary of $S_{-1} \setminus S_{-1,\epsilon}$, via its identification with $J^1(S^n)$ using $\psi$, as
\[
\psi_W^{-1}\psi(L_{\epsilon,2}) = \{(\sqrt{1+\epsilon},-q,0 \mid q_{n+1} \geq \epsilon\}
\]
\[
\psi_W^{-1}\psi(C_{\epsilon,-2}) = (-\sqrt{1+\epsilon},q,0 \mid q_{n+1} \leq -\epsilon\}
\]

Note that the map $\psi_W^{-1}\psi$, defined on $\psi^{-1}(S_{-1,\epsilon} \cap S^{st}_{1,\epsilon})$ looks like:
\[
(z,q,p) \mapsto (z',q',p') \coloneqq \bigg(\dfrac{z}{2}\sqrt{1+\epsilon},p-\dfrac{zq}{2},(\dfrac{z^2}{4}-1)\sqrt{1+\epsilon}q - \dfrac{z}{2}\sqrt{1+\epsilon}p\bigg)
\]

By choosing generating functions carefully, we can ensure that the $q'$ coordinates on $\psi_W^{-1}\psi(S_{stab}(L))$ satisfy $\{(q')_{n+1} \leq 0\}$. Also, note that the $\sum p^2$ coordinate on $\psi_W^{-1}\psi(S_{-1,\epsilon}\cap S^{st}_{1,\epsilon})$ is given by
\[
p' \cdot p' = ((\dfrac{z^2}{4}-1)\sqrt{1+\epsilon}q - \dfrac{z}{2}\sqrt{1+\epsilon}p)\cdot((\dfrac{z^2}{4}-1)\sqrt{1+\epsilon}q - \dfrac{z}{2}\sqrt{1+\epsilon}p) = (1+\epsilon)p^2
\]

Thus we can perform a Legendrian isotopy on $S_{stab}(L)$ as obtained above, to move it into $\psi_W^{-1}(S_{-1} \setminus S_1)$, by scaling the $z$ and $p$ coordinates by the same constant. The argument currently seems to conflate contactomorphisms (via $\psi$ and $\psi_W$) with isotopies, but that will be argued next.

We now address how to frame our above argument in terms of $S_1$ instead of $S_1^{st}$. The smoothing parameter $\delta$ ensures that the identification between the coordinate systems at $(S_{-1,\epsilon}\cap S^{st}_{1,\epsilon})$, which we wrote using $\psi_W^{-1}\psi$, is smoothed out to be realized by the Hamiltonian flow in $S_1$. The first piece of the argument above, i.e. the Reidemeister-1 move, can be done in the flat part of $S_1$. Then, the the second step of the argument above, instead of using the maps $\psi_W$ and $\psi$, would be achieved using the Hamiltonian flow in $S_1$. The end result of the isotopy however would be the same, and we can consider the three pieces of the parametrized $S_{stab}(L)$ which we will denote
\begin{enumerate}
    \item $L' \coloneqq \psi_W^{-1}\psi(L_{\epsilon,2})$
    \item $M' \coloneqq \psi_W^{-1}\psi(M_{\epsilon, 2, -2})$
    \item $C' \coloneqq \psi_W^{-1}\psi(C_{\epsilon,-2})$
\end{enumerate}

These are represented in the third image of Figure~\ref{fig:isotopy-1}. The dotted lines for $\psi_W^{-1}\psi(M_{\epsilon, 2, -2})$ represents that it belongs to $S_{-1} \setminus S_{1}^{st}$.

\subsubsection{Second step of isotopy}\label{sss: step2}

We can assume that we have chosen the neighbourhoods above carefully so that the $\partial_z$ direction in $J^1(S_{pre-stab}(L))$, or equivalently the part of $J^1(S_{stab}(L))$ identified with $S_{-1}$, agrees with the fibration direction of the open book. We can also assume that $\{q_{n+1} \leq 0\}$ corresponds to the part of the page of $\nu_{stab}$ that comes from the $n$-handle attachment, in the first step of stabilising the open book $\nu$.

The $n$-handle attached to the page is locally $D^n \times D^n$, and the non-trivial monodromy comes from the Dehn twist about the sphere formed by the core and the disk $L$. Over the handle across all pages, can give the fiber direction $S^1$-coordinates from $[-2,2]$ . We can further ensure that in the complement of the part of $J^1(S_{pre-stab}(L))$ that is replaced by $S_1$, the fiber direction agrees with the $z$-coordinate. 

Let us focus on the region of the $\nu_{stab}$ open book of $(M,\xi)$ that includes these $n$-handles of the pages, and the part of the binding attached to them, only. We call it $N$, and it comprises the following pieces, with the fiber direction being given by $z \in [-2,2]$:
\begin{enumerate}
    \item for $z \notin (-\sqrt{1+\epsilon}, \sqrt{1+\epsilon})$, portion of page $\{z=c\}$ is $\{(c,q,p)\mid q_{n+1} \leq 0, p^2 \leq 2\} $. Across all pages, this gives a $I \times D^n \times D^n \subset N$.
    \item for $z \in (-\sqrt{1+\epsilon}, \sqrt{1+\epsilon})$, the portion of the manifold given by $\{(c,q,p) \in N \mid p^2 +\frac{z^2}{4} \leq 1\} $ is identified with $S_{-1,\epsilon}$ and replaced by $S^{st}_{1,\epsilon}$. We will denote this portion as $N_{surg} \subset N$.
    \item To the boundary of the above portions of pages, which is $S^1 \times D^n \times S^{n-1}$, given by $\{(c,q,p) \mid p^2 = 2\}$, the part of the binding that comes inside $N$ is attached, which is $D^2 \times D^n \times S^{n-1}$, with coordinates $((r,\theta),q,p)$ where $(r,\theta)$ belongs to $D^2$ and $p$ belongs to $S^{n-1}$ -- recording the codirections in $U(T^*D^n)$. The attachment is via the following identification 
    \[
    \varphi: S^1 \times I \times D^n \times S^{n-1} \to S^1 \times D^n \times S^{n-1} \times I
    \]
    \[
    (\theta, r, q,p) \mapsto (z,q,p)
    \]
    where the coordinate-wise functions are, by slight abuse of notation on $p$,
    \[
    \varphi(q) = q, \varphi(\theta) = z, \varphi(r,p) = u(r)p
    \]
    where $u(r)$ is a decreasing smooth function on $r$ taking the value $\sqrt{2}$ at $r=1$ and $\sqrt{3}$ at $r=0$.
\end{enumerate}

Inside $N$, if we are near the core of a page, we will use $(z,q,p)$ coordinates and refer to them as ``page coordinates'', and if we are away from the core and in the binding we will use $(r,\theta,q,p)$ coordinates and call them ``binding coordinates''.

The contact structure on $N\setminus N_{surg}$, away from the cores of the pages, is given by the kernel of the standard open book contact form near the binding, which is 
\[
\alpha = h_1(r)pdq + h_2(r)d\theta
\]
with $h_1$ and $h_2$ as in Figure~\ref{fig:obd-forms}.
Recall that in the first step of the isotopy, we isotoped $S_{stab}(L)$ into $N \setminus N_{surg}$. This isotoped $S_{stab}(L)$ is the smooth union of three pieces, namely $L', M',$ and $C'$. On the region of $N \cap \text{ binding}$, denote $I_{surg} \subset S^1$ to be the set defined as $\theta \in I_{surg} \iff (\varphi^{-1}(\theta),q,p) \in N_{surg}$. Consider an orientation reversing identification between $I_{surg}$ and $S^1 \setminus I_{surg}$ that is identity on $\partial I_{surg}$. Say that this sends $\theta \in I_{surg}$ to $\theta_{ref} \in (S^1 \setminus I_{surg})$.


Now we perform an isotopy of $S_{stab}(L)$ inside $(N \setminus N_{surg}) \cap \text{ binding}$. Using binding coordinates, for $0 \leq r < < 1$, the contact form on $(N \setminus N_{surg}) \cap \text{ binding}$ is given by $pdq + r^2d\theta$, where $p \in U(T^*D^n)$ gives the codirections. Carrying forward the coordinates from Section~\ref{sss: step1}, $U(T^*D^n)$ is parametrised as $\{(p_1,\dots,p_{n+1},q_1,\dots,q_{n+1}) \mid \sum q_i^2 = 1, q_{n+1} \leq 0, p\cdot q=0, \sum p_i^2 = 1\}$, thus we have $\sum p_idq_i = 0$. So the contact form in the binding can be written as $-p_{n+1}dq_{n+1} + r^2d\theta$. It follows that for any Legendrian embedding in here with $(r, \theta,q,p)$ coordinates, we must have $\dfrac{dz}{dq_{n+1}} = \dfrac{p_{n+1}}{r^2}$.

By construction, $p_{n+1} \equiv 0$ on $L'$ and $C'$, which live on the ``page'' part of $N \setminus N_{surg}$. Our goal is to isotope $S_{stab}(L)$ inside $N \setminus N_{surg}$ so that $L'$ and $C'$ can be brought arbitrarily close to each other, and then we are done by Lemma~\ref{lem:unknot}. On $L'$, this isotopy is simply by flowing along $\partial_z$. Since we cannot do this isotopy through $N_{surg}$, we will use the binding to do the isotopy along the ``other side'' of the circle, i.e. $S^1 \setminus I_{surg}$.

\begin{figure}[htb]{\tiny
\begin{overpic}[width=\textwidth,tics=10] 
{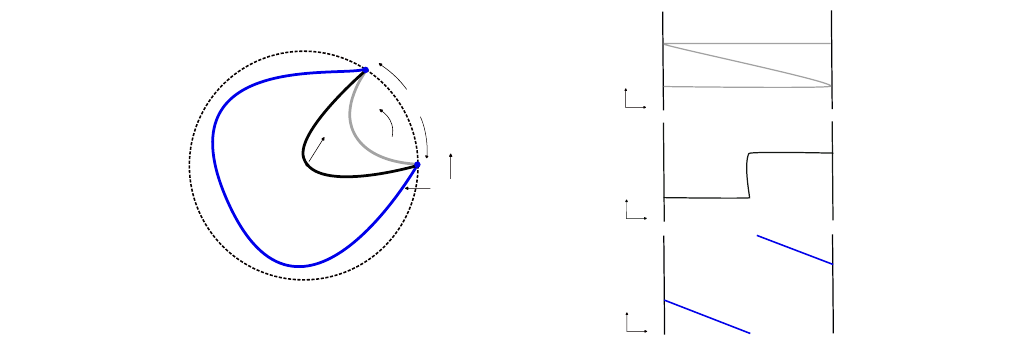}
\put(147,85){$r$}
\put(183,95){$\theta$}
\put(186,110){$I_{surg}$}
\put(195,65){$p$}
\put(210,80){$z$}
\put(285,120){$\theta$}
\put(290,105){$q_{n+1}$}
\put(285,68){$\theta$}
\put(290,53){$q_{n+1}$}
\put(285,16){$\theta$}
\put(290,1){\tiny{$q_{n+1}$}}
\end{overpic}}
\caption{The second step of the isotopy through the binding. We can ``flip'' the $\theta$ coordinates of the portions of the Legendrians where $p_{n+1} \neq 0$ by first making the slope vertical, which takes the Legendrian to the $\{r=0\}$ slice of the binding, and then flowing further to make the $\theta$ coordinate become $\theta_{ref}$. The right side of the above picture shows how this plays out for a component where $\{p_{n+1} < 0\}$, which is represented by the diagonal edge in the top right figure.}
\label{fig:isotopy-2}
\end{figure}

Consider a maximal component of $M'$ where $\{p_{n+1} \neq 0\}$. By an isotopy as shown in Figure~\ref{fig:isotopy-2}, $S_{stab}(L)$ can be isotoped so that the coordinates $(r,\theta,p,q)$ on this component flow to $(r,\theta_{ref},p,q)$, and elsewhere $S_{stab}(L)$ is unchanged. If this is done on all such components, we can now isotope $S_{stab}(L)$ to increase the $r$ coordinate so that it is moved to the ``page'' region of $N \setminus N_{surg}$. Finally, we can isotope $S_{stab}(L)$ in $N \setminus N_{surg}$ to bring $L'$ and $C'$ arbitrarily close to each other, along the lines of how we showed $S_{join}(L)$ was isotopic to the unknot. Our conclusion, similarly to that case, now follows by applying Lemma~\ref{lem:unknot}. This concludes the proof of Theorem~\ref{thm:isotopy}.
 
\begin{remark}
    The proof that $S_{join}(L)$ is the unknot can also be interpreted along the lines of Courte-Ekholm's proof of triviality of doubles \cite{courte2017lagrangian}. Stabilising the open book is a Weinstein cobordism built by attaching cancelling $(n-1)-$ and $n$-handles, and the first step of isotopy in Section~\ref{sss: step1} can be understood as showing that $S_{stab}(L)$ bounds the $(n+1)$-disk left behind on the contact boundary by the belt-sphere of the $n$-handle. The second step of the isotopy in Section~\ref{sss: step2} can be modified to produce a pre-Lagrangian foliation on this disk, as in \cite{courte2017lagrangian}. We are grateful to an anonymous reviewer for suggesting this approach which simplified the proof.
\end{remark}

\subsection{Identifying with $\Lambda(L,L)$ in the $(S^{2n+1}, \xi_{st})$ case}

We will quickly review Ekholm's \cite{ekholm2016non} construction and Courte-Ekholm's proof strategy \cite{courte2017lagrangian}. To start with, one considers a codimension 1 space $W_{\rho} = \{z = (\frac{\rho(x_n)}{\rho'(x_n)})y_n \} \cap \{ 0 < x_n < 1 \}$ in $(\mR^{2n+1}, \xi_{st})$ which is transverse to the Reeb flow. The function $\rho$ can be considered a smoothing of the function $(1 - |x|)$. A Lagrangian disk $L$ with a cylindrical end can be embedded in $W_\rho$ with its cylindrical end approaching $x_n = 0$. Reflecting the $x_n, y_n$ coordinates, another copy of $L$, $L^-$, can be similarly embedded with its cylindrical end approaching that of $L$. Taking the Legendrian lift of these and joining along the ends gives the Legendrian sphere $\Lambda(L,L)$. Deforming the hypersurface $W_\rho$ to $\{z = 0\}$, while staying transverse to $\partial_z$, recovers the construction of $\Lambda(L,L)$ as originally described in \cite{ekholm2016non}. 

To show that this is the unknot, they describe the construction in $(\mR^{2n+1}, ker(dz - \sum_{i=1}^{n-1} y_i dx_i + r_n^2 d \theta_n)$. Then, they modify the contact structure so that two halves of the sphere can be brought close to each other by flowing along $\partial_{\theta_n}$, sketching out a pre-Lagrangian $(n+1)$-disk foliated by Legendrian disks in the process.

\begin{proof}[Proof of Corollary \ref{cor:courte}]By a contactomorphism (refer Example 2.1.10 in \cite{Geiges08}), we can identify a hemisphere of $(S^{2n+1}, \xi_{st})$ with $(\mR^{2n+1}, ker (dz+\sum r_i^2 d\theta_i))$. Under this, the modified $S_{join}(L)$, as in the proof above, which is the Legendrian lift of $L$ in a page, joined with a pushoff, is identified with the Legendrian lift of a disk in $\{z=0\}\cap\{0<x_n<1\}$, joined with a pushoff. (Note that in the proof above we have modified the open book pages so that the Lagrangian in the page is a Legendrian, but for the contactomorphism we want to respect the open book structure given by $\theta_n$, and hence have to use Legendrian lifts.)

Then, by a further sequence of contactomorphisms, we can get to $(\mR^{2n+1}, ker(dz - \sum_{i=1}^{n-1} y_i dx_i + r_n^2 d \theta_n)$, where the Legendrian sphere is still given by the Legendrian lift of a disk in $\{z = 0 \}$ joined with its pushoff. Now deforming the hypersurface $\{z = 0\}\cap\{0<x_n<1\}$ to $W_\rho$, and tracing back through Courte-Ekholm's proof, it is clear that this sphere is in fact isotopic to $\Lambda(L,L)$. Since it is the image under a contactomorphism of the unknot, $\Lambda(L,L)$ is in fact the unknot. This completes the proof of Corollary \ref{cor:courte}.
\end{proof}

\bibliographystyle{amsalpha}
\bibliography{references}

\end{document}